\renewcommand{\Re}{\mathrm{Re}}
\newcommand{\Cl}{\mathrm{Cl}}
\newcommand{\Z}{\mathbb Z}
\newtheorem{theorem}{Theorem}[section]
\newtheorem{conjecture}[theorem]{Conjecture}
\newtheorem{definition}[theorem]{Definition}
\newtheorem{lemma}[theorem]{Lemma}
\newtheorem{corollary}[theorem]{Corollary}
\newtheorem{proposition}[theorem]{Proposition}
\newtheorem{fact}[theorem]{Fact}
\newtheorem{counter}[theorem]{Counterexample}
\newtheorem{question}[theorem]{Question}
\begin{document}

\markboth{V. A. Manathunga, J. Conant}
{The Conway Polynomial and Amphicheiral Knots}


\title{THE CONWAY POLYNOMIAL AND AMPHICHEIRAL KNOTS}

\author{J. Conant}

\address{Department of Mathematics \\
University of Tennessee\\
Knoxville, TN 37996\\
jconant@math.utk.edu}

\author{V. A. Manathunga}

\address{Department of Mathematics and Statistics\\
Austin Peay State University,\\
 Clarksville TN 37040 \\
manathungav@apsu.edu}

\begin{abstract}
According to work of Hartley and Kawauchi in 1979 and 1980, the Conway Polynomial of all negative amphicheiral knots and strongly positive amphicheiral knots factors as $\phi(z)\phi(-z)$ for some $\phi(z)\in\Z[z]$. Moreover, a 2012 example due to Ermotti, Hongler and Weber shows that this is not true for general amphicheiral knots. On the other hand, in 2006 the first author made a conjecture equivalent to saying that the Conway polynomial of all amphicheiral knots splits as $\phi(z)\phi(-z)$ in the ring $\mathbb Z_4[z]$. In this paper, we establish this conjecture for all periodically amphicheiral knots built from braids, where the period preserves the braid structure. We also 
give counterexamples to conjectures on the leading coefficient of the Conway polynomial of an amphicheiral knot due to Stoimenow.
\end{abstract}
\maketitle


\section{Introduction}	
 A knot is called amphicheiral if it coincides with its mirror image. More formally, we say there is an orientation reversing map from $S^3 \to S^3$ which fixes the knot setwise. In the category of oriented knots, we can distinguish this amphicheirality as positive or negative depending on whether the map preserves or reverses the orientation of the knot. If the map used in this process is an involution, we call the resulting amphicheirality ``strong''. It is worthwhile to note that a knot can be positive and negative amphicheiral at the same time \cite{bur1}. The figure eight is an example of such a knot.

The Jones Polynomial is a powerful tool for detecting chirality, since the Jones Polynomial of a mirror image is often different from that of the original knot. The Conway Polynomial is, on the other hand, invariant under the process of taking a mirror image, so it not obvious that it could be used for detecting chirality.
However, one can indeed use the Conway (or Alexander) polynomial to detect chirality among knots. In this article, we discuss two conjectures, one proposed by the first author and one by A. Stoimenow. 

The first conjecture (Conjecture \;\ref{conj11p}) has its foundation in the theory of Vassiliev invariants. However, after reformulation of the conjecture (Conjecture \;\ref{conj11}) it can be seen that it closely resembles a theorem of A. Kawauchi and R. Hartley \cite{hartley2}. Their theorem states that for negative and strong positive amphicheiral knots, the Conway polynomial splits. That is, the Conway polynomial of these two types of  amphicheiral knots can be written as $f(z)f(-z)$ for some integer polynomial $f$. However this is not true for the remaining types of amphicheiral knots, positive amphicheiral knots which are not strong, and counterexamples have been found \cite{ermotti1}. The first author's conjecture claims that instead of considering splittings over $\Z$, if we consider splittings over $\Z_4$ instead, then in fact we can write the Conway polynomial of any amphicheiral knot as $f(z)f(-z)$ for some $f\in \Z_4[z]$, including the nonstrong positive amphicheiral case \cite{jconant1}. In general, to prove the conjecture for all amphicheiral knots, one needs only to prove it for hyperbolic knots (Theorem~\ref{theorem13}).

Now, the symmetry groups for hyperbolic knots are classified: they are either dihedral or cyclic\cite{host,kod,ril}. If the knot is amphicheiral, they have to contain an orientation-reversing element. If there is an orientation-reversing element of order 2, that means the map is involution, so we know the knot is  strongly amphicheiral and the Conway polynomial of the knot splits as  $f(z)f(-z)$ by Hartley-Kawauchi. Dihedral groups are generated by involutions, so any amphicheiral knot with dihedral symmetry group must have an orientation-reversing involution. So we are left with cyclic groups $\Z_{2n}$ with orientation reversing generator $h$. If $n$ is odd, then $h^n$ is an orientation-reversing involution, so we are done in that case. Hence we may assume $n$ is even. All hyperbolic knots with these cyclic symmetry groups can be constructed by taking a tangle T, considering the concatenation $(T\cdot T^*))^n$ and closing it up via the standard closure. Here $T^*$ is the mirror image tangle with all crossings reversed. In sections 2 and 3, we consider the case where the tangle is actually a braid, proving Conjecture \ref{conj11} is true for all amphicheiral knots coming from braids of the form $(ww^*)^{n}$. In this case, we are able to use the Burau representation and its connection to the knot's Alexander polynomial. In the case of a general tangle $T$, this particular method is unavailable, so a new technique is needed. It is worthwhile to note that there are amphicheiral knots which are not periodic \cite{hartley3}.


The second conjecture (Conjecture~\ref{conj13}) is based on the leading coefficient of the Conway polynomial. The leading coefficient of the Conway polynomial can sometimes be used to detect chirality among knots. In 1997, K. Murasugi and J. Prztycki proved \cite{Mu} that if the leading coefficient of the Conway polynomial of an  alternating knot is prime, then it is not amphicheiral. Another implication of A. Kawauchi and R. Hartley's theorem discussed above is that except for the non strong positive amphicheiral case, the leading coefficient of the Conway polynomial of an amphicheiral knot is square. These results lead to the conjecture that the leading coefficient of amphicheiral knots is square or at least not prime. A. Stoimenow further strengthens this conjecture to hypothesize that the leading coefficient of the Conway polynomial is square for certain other amphicheiral knot classes such as the classes of alternating amphicheiral knots. In section 4, we give several counterexamples to this conjecture by constructing nonstrong positive amphicheiral knots as braid closures. 

\section{Amphicheiral knots and the Conway polynomial}

Considerations in Vassiliev theory led the first author  \cite{jconant1} to conjecture

\begin{conjecture}\label{conj11p}
If $K$ is an amphicheiral knot, then its Conway polynomial $C(z)$ satisfies the property that $C(z)C(iz)C(z^2)$ is a perfect square in $\Z_4[z^2]$
\end{conjecture}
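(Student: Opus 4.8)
The plan is to pass immediately from the perfect-square formulation (Conjecture~\ref{conj11p}) to the equivalent splitting statement (Conjecture~\ref{conj11}), namely that $C(z)\equiv\phi(z)\phi(-z)$ in $\Z_4[z]$, and to prove the latter for knots built from braids. By Theorem~\ref{theorem13} it suffices to treat hyperbolic knots, and the symmetry classification quoted in the introduction reduces us to a cyclic symmetry group $\Z_{2n}$ with $n$ even, realized as the standard closure of a braid $\gamma=(ww^*)^{n}$. The decisive structural feature is that $n$ even makes this braid a genuine square, $(ww^*)^{n}=\big((ww^*)^{n/2}\big)^{2}$; writing $A=\bar\rho(w)$ and $B=\bar\rho(w^*)$ for the reduced Burau matrices and $C=(AB)^{n/2}$, the Burau matrix of the full braid is $C^{2}$.

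First I would record the Burau-to-Conway dictionary: for a braid $\beta\in B_k$ whose closure is a knot, $\Delta_{\hat\beta}(t)\doteq\det(\bar\rho(\beta)(t)-I)/(1+t+\cdots+t^{k-1})$ up to a unit $\pm t^{j}$, and the substitution $z=t^{1/2}-t^{-1/2}$ converts the symmetry $t\mapsto t^{-1}$ into $z\mapsto -z$. The second ingredient is the effect of mirroring on Burau. Since $w^{*}$ is the reversal of $w^{-1}$, combining the word-reversal/transpose relation with the Squier (unitarity) duality of the Burau form yields a conjugacy $\bar\rho(w^{*})(t)=G\,\bar\rho(w)(t^{-1})\,G^{-1}$ for a fixed matrix $G$; this is the algebraic incarnation of amphicheirality, and pinning it down precisely is the technical core of the setup.

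With these in hand, the perfect-square factorization gives $\det(C^{2}-I)=\det(C-I)\,\det(C+I)$, and the mirror duality identifies the two determinantal factors as images of one another under $t\mapsto t^{-1}$, hence under $z\mapsto -z$. After dividing out the cyclotomic normalization $1+t+\cdots+t^{k-1}$ and absorbing the unit, I would aim to read this off as a splitting $\Delta_{\hat\gamma}(t)\doteq f(t)f(t^{-1})$, equivalently $C(z)\doteq\phi(z)\phi(-z)$, recovering in the braid case the Hartley--Kawauchi shape but now for the nonstrong positive class as well.

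I expect the main obstacle to be turning this last step into an honest identity in $\Z_4[z]$ rather than merely up to units or error terms. The conjugating matrix $G$, the cyclotomic denominator, and the need to keep $f(t)$ and $f(t^{-1})$ genuine factors rather than Laurent polynomials all introduce discrepancies, and the example of Ermotti, Hongler and Weber shows that these discrepancies cannot be removed over $\Z$; thus the substance of the argument is exactly that every offending correction term is divisible by $2$ and hence vanishes after reduction modulo $4$. Controlling these $2$-divisible terms uniformly in $w$ and $n$ is the heart of the proof. Finally, the method is genuinely confined to the braid case: for a general tangle $T$ the Burau representation is unavailable, so the concatenation $(T\cdot T^{*})^{n}$ requires a different technique, which is why the full conjecture for all amphicheiral knots remains out of reach by this route.
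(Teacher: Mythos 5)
You should first note that the statement you are asked to prove is a \emph{conjecture}: the paper does not prove it, and no complete proof is known. What the paper does is (a) reformulate it, via Theorem~\ref{theorem11}, as the splitting statement $C(z)\equiv f(z)f(-z)\bmod 4$ (Conjecture~\ref{conj11}); (b) reduce it, via Theorem~\ref{theorem13}, to hyperbolic positive amphicheiral knots; and (c) prove it only for knots arising as closures of braids $(ww^*)^{k}$ (Theorem~\ref{theorem15} and its corollary). Your proposal follows exactly this reduction chain and, as you yourself concede in your last paragraph, stops at the same place: the general tangle case $(T\cdot T^*)^n$ is untouched. So even if every step of your sketch were executed, you would not have proved the statement; you would at best have reproduced the paper's partial result.

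Within the braid case there are also concrete gaps where your sketch diverges from what actually works. You propose to obtain the symmetry of $\det(\bar\rho(\gamma)-I)$ under $t\mapsto t^{-1}$ from a Squier-unitarity conjugacy $\bar\rho(w^*)(t)=G\,\bar\rho(w)(t^{-1})G^{-1}$, which you do not establish; the paper instead derives the symmetry of the characteristic polynomial of $\beta((ww^*)^k)$ from Lemma~\ref{lemma12} (the braids $\Re((ww^*)^k)$ and $(ww^*)^{-k}$ are conjugate) together with Morton's identification of that characteristic polynomial with the two-variable Alexander polynomial of the closure union the braid axis, which is invariant under global orientation reversal. More importantly, your plan leaves the actual mod-$4$ mechanism --- ``every offending correction term is divisible by $2$'' --- unexplained, and this is precisely where the content lies. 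The paper's mechanism is explicit: the palindromic symmetry forces $\chi(1)=2(1+a_1+\cdots+a_n)$ and $\chi(-1)=2(1-a_1+\cdots+(-1)^na_n)$, whence $\det(I-\beta)+(-1)^n\det(I+\beta)\equiv 0\bmod 4$; the cyclotomic denominator $1+t+\cdots+t^{2n}$ is itself shown to split as $g(z)g(-z)$ in $\Z_4[z]$ via the Lucas-polynomial identity of Lemma~\ref{LucasFac}; and the whole argument is an induction on $m$ for $(ww^*)^{2^m}$, with the base case supplied by Hartley--Kawauchi and the general exponent $k=2^m(2q+1)$ handled by rewriting $(ww^*)^{2q+1}=\tilde w\tilde w^*$. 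None of these three ingredients appears in your sketch, and without them the claim that the discrepancies vanish modulo $4$ is an assertion, not an argument.
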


Our first task is to give a simpler reformulation of this conjecture. 
\begin{theorem}\label{theorem11}
Let $C(z)$ denote the Conway polynomial of a knot $K$. Then the following are equivalent.
\begin{enumerate}
\item $C(z)C(iz)C(z^2)$ is a perfect square in $\Z_4[z^2]$
\item $C(z)C(iz)\equiv C(z^2) \mod 4$
\item $C(z)$ splits in $\mathbb{Z}_4[z]$. I.e. $C(z)\equiv f(z)f(-z) \mod 4$ for some $f \in \Z[z]$
\end{enumerate}
\end{theorem}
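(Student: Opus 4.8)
The plan is to pass to the variable $w=z^2$, in which all three conditions simplify. Writing the (even) Conway polynomial as $C(z)=P(w)$ with $P\in\Z[w]$ and $P(0)=1$, one has $C(iz)=P(-w)$ and $C(z^2)=P(w^2)$, while $f(z)f(-z)=a(w)^2-w\,b(w)^2$, where $a,b\in\Z[w]$ come from the even/odd split $f(z)=a(z^2)+z\,b(z^2)$. Thus (1) becomes ``$P(w)P(-w)P(w^2)$ is a square in $\Z_4[w]$'', (2) becomes $P(w)P(-w)\equiv P(w^2)\pmod 4$, and (3) becomes ``$P(w)\equiv a(w)^2-w\,b(w)^2\pmod 4$ for some $a,b$''. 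Throughout I will use the Frobenius identity $Q(w)Q(-w)\equiv Q(w)^2\equiv Q(w^2)\pmod 2$, valid for every $Q\in\Z[w]$; the whole content of the theorem is the upgrade of such congruences from modulus $2$ to modulus $4$.

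For (2)$\Rightarrow$(1) I multiply (2) by $P(w^2)$ to get $P(w)P(-w)P(w^2)\equiv P(w^2)^2\pmod 4$, a square. For (1)$\Rightarrow$(2), suppose $P(w)P(-w)P(w^2)\equiv G(w)^2\pmod4$. Reducing mod $2$ and using Frobenius gives $G^2\equiv P(w^2)^2$, whence $(G-P(w^2))^2\equiv0$ in the domain $\Z_2[w]$ and so $G\equiv P(w^2)\pmod2$; writing $G=P(w^2)+2K$ then forces $G^2\equiv P(w^2)^2\pmod4$. Hence $P(w)P(-w)P(w^2)\equiv P(w^2)^2\pmod4$, and since $P(w^2)$ has constant term $1$ it is a unit in $\Z_4[[w]]$ and may be cancelled, yielding (2). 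This settles (1)$\Leftrightarrow$(2).

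The technical heart is the following Norm Lemma, which I would prove first and which gives (3)$\Rightarrow$(2) at once: \emph{for every $a,b\in\Z[w]$ the norm form $\gamma=a(w)^2-w\,b(w)^2$ satisfies $\gamma(w)\gamma(-w)\equiv\gamma(w^2)\pmod4$.} Indeed, if (3) holds then $P\equiv\gamma\pmod4$, so $P(w)P(-w)\equiv\gamma(w)\gamma(-w)\equiv\gamma(w^2)\equiv P(w^2)\pmod4$. To prove the lemma I expand $\gamma(w)\gamma(-w)-\gamma(w^2)$ into three groups of terms. The genuinely cross term is $w\bigl(a(w)b(-w)-a(-w)b(w)\bigr)\bigl(a(w)b(-w)+a(-w)b(w)\bigr)$; each factor is $\equiv0\pmod2$ by Frobenius, so their product lies in $4\Z[w]$. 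The remaining terms assemble into $\bigl(a(w)a(-w)\bigr)^2-a(w^2)^2$ and $-w^2\bigl[(b(w)b(-w))^2-b(w^2)^2\bigr]$; here $a(w)a(-w)=a(w^2)+2\delta$ for some $\delta\in\Z[w]$ (again by Frobenius), so squaring pushes the discrepancy into $4\Z[w]$, and likewise for $b$. Thus all corrections vanish mod $4$.

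Finally, for (2)$\Rightarrow$(3): mod $2$ the polynomial $\bar C(z)$ is a perfect square, so there is $g_0\in\Z[z]$ with $C(z)\equiv g_0(z)^2\pmod2$; take $f=g_0$. Writing $\gamma_{g_0}(w)$ for its norm form, one has $P\equiv\gamma_{g_0}\pmod2$, say $P=\gamma_{g_0}+2\epsilon$, and I must show $\epsilon\equiv0\pmod2$, which is precisely (3). Expanding $P(w)P(-w)$ and using Frobenius as above shows $P(w)P(-w)\equiv\gamma_{g_0}(w)\gamma_{g_0}(-w)\pmod4$; combining the hypothesis (2) with the Norm Lemma applied to $\gamma_{g_0}$ gives $\gamma_{g_0}(w^2)\equiv\gamma_{g_0}(w^2)+2\epsilon(w^2)\pmod4$, so $2\epsilon(w^2)\equiv0\pmod4$ and $\epsilon\equiv0\pmod2$, as required. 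I expect the Norm Lemma to be the main obstacle: the bookkeeping that forces every correction term into $4\Z[w]$ rather than merely $2\Z[w]$ is where the special arithmetic of $\Z_4$ enters, and both implications involving (3)---namely (3)$\Rightarrow$(2) and (2)$\Rightarrow$(3)---ultimately rely on it.
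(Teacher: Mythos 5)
Your proof is correct, and at its core it is the same argument as the paper's, just written in the variable $w=z^2$. The $(1)\Leftrightarrow(2)$ step is identical (reduce mod $2$, use uniqueness of square roots in $\mathbb Z_2[w]$, write the root as $P(w^2)+2K$, square, and cancel the non-zero-divisor $P(w^2)$), and your Norm Lemma is exactly the paper's key identity $f(z)f(-z)f(iz)f(-iz)\equiv f(z^2)f(-z^2)\pmod 4$ restated for $\gamma(w)=a(w)^2-wb(w)^2$; your proof of it (isolating the antisymmetric cross term and the discrepancies $(a(w)a(-w))^2-a(w^2)^2$, each of which is a product of two even polynomials) is a cleaner bookkeeping of the paper's expansion with $\alpha,\beta,\gamma,\delta\in\Z[z^4]$ and its lemma $F(z^2)^2\equiv F(z)^4\pmod 4$. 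The one place you genuinely diverge is $(2)\Rightarrow(3)$: the paper observes that both sides of $C(z)C(iz)\equiv C(z^2)$ are polynomials in $z^2$ and simply substitutes $z$ for $z^2$, which yields $(3)$ immediately with $f=P$ (the polynomial with $C(z)=P(z^2)$); your route through a mod-$2$ square root $g_0$, the decomposition $P=\gamma_{g_0}+2\epsilon$, and the Norm Lemma is valid but does considerably more work for this implication. The substitution trick is worth knowing here, since it shows $(2)\Rightarrow(3)$ needs none of the $\Z_4$ arithmetic at all.
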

\begin{proof}

To prove (1) implies (2), suppose $C(z)C(iz)C(z^2)=f(z)^2\in \Z_4[z^2]$. 
First we consider what happens modulo 2. Note that $g(z)^2=g(z^2)\in \mathbb Z_2[z]$, and that $C(iz)=C(z)\in   \mathbb Z_2[z]$.
So 
$$
C(z)C(iz)C(z^2)=C(z)C(z)C(z^2)=C(z^2)C(z^2)
$$
But square roots are unique in $\mathbb Z_2[z]$, so $f(z)\equiv C(z^2)\mod 2$, and we have that $f(z)=C(z^2)+2g(z)$ for some $g(z)\in\mathbb Z_4[z]$. Squaring both sides, we see that
$f(z)^2=C(z^2)^2\in\mathbb Z_4[z]$. Multiplication by $C(z^2)$ is an injection on $\mathbb Z_4[z]$, so the equation 
$$C(z)C(iz)C(z^2)=f(z)^2=C(z^2)^2$$
implies that $C(z)C(iz)=C(z^2)\in\mathbb Z_4[z]$, as desired.

We can easily see that (2) implies (1).


To prove (2) implies (3), substitute $z$ for $z^2$ in (3), then  $C(z)\equiv f(z)f(-z)\mod 4$ for some integer polynomial $f$. 

To prove (3) implies (2) we use following argument. Suppose $C(z)= f(z)f(-z)$ for some $f\in \Z[z]$. Let $\alpha(z),\beta(z),\gamma(z),\delta(z)\in \Z[z^4]$. Then a general integer polynomial $f(z)$ can be written $f(z)=\alpha(z)+\beta(z)z+\gamma(z)z^2+\delta(z)z^3$. 
\begin{align*}
&f(z)f(-z)f(iz)f(-iz)=\\
&=(\alpha+\beta z+\gamma z^2+\delta z^3)(\alpha-\beta z+\gamma z^2-\delta z^3)(\alpha +i\beta z-\gamma z^2-i\delta z^3)(\alpha -i\beta z-\\ &\gamma z^2+\delta z^3)\\
&=[(\alpha+\gamma z^2)^2-(\beta z+\delta z^3)^2][(\alpha-\gamma z^2)^2-(i\beta z-i\delta z^3)^2]\\
&=[(\alpha+\gamma z^2)^2-(\beta z+\delta z^3)^2][(\alpha-\gamma z^2)^2+(\beta z-\delta z^3)^2]\\
&\equiv [(\alpha+\gamma z^2)^2-(\beta z+\delta z^3)^2][(\alpha+\gamma z^2)^2+(\beta z+\delta z^3)^2]\\
&=(\alpha+\gamma z^2)^4-(\beta z+\delta z^3)^4\\
&=F(z)^4-G(z)^4
\end{align*}
Here we have used the fact $(A+B)^2\equiv (A-B)^2\mod 4$. On the other hand
\begin{align*}
f(z^2)f(-z^2)&=(\alpha(z^2)+\beta(z^2)z^2+\gamma(z^2)z^4+\delta(z^2)z^6)(\alpha(z^2)-\beta(z^2)z^2+\gamma(z^2)z^4- \\& \delta(z^2)z^6)\\
&=(\alpha(z^2)+\gamma(z^2)z^4)^2-(\beta(z^2)z^2+\delta(z^2)z^6)^2\\
&=F(z^2)^2-G(z^2)^2
\end{align*}
So, to show that $f(z)f(-z)f(iz)f(-iz)\equiv f(z^2)f(-z^2)\mod 4$. It suffices to show that $F(z^2)^2\equiv F(z)^4 \mod 4$ for any integer polynomial $F(z)$. This can easily be proven by induction. The base case of a monomial is trivial. Otherwise, write $F(z)=A(z)+B(z)$ for two polynomials of shorter length. Then 
\begin{align*}
F(z^2)^2&=A(z^2)^2+2A(z^2)B(z^2)+B(z^2)^2\\
&=A(z)^4+2A(z)^2B(z)^2+B(z)^4\\
&=(A(z)+B(z))^4
\end{align*}
 using the fact that $(A+B)^4\equiv A^4+2A^2B^2+B^4 \mod 4$.
  Thus we proved that if $C(z)=f(z)f(-z)\mod 4$ then, 
\begin{align*}
C(z)C(iz)&\equiv f(z)f(-z)f(iz)f(-iz) \mod 4\\
		 &\equiv f(z^2)f(-z^2)\mod 4\\
		 &\equiv C(z^2) \mod 4
\end{align*}

\end{proof}

Thus Conjecture~\ref{conj11p} can be reformulated as
\begin{conjecture}\label{conj11}
If $K$ is amphicheiral, then $C(z)\equiv f(z)f(-z) \mod 4$ for some $f \in \Z[z]$
\end{conjecture}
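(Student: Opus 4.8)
The plan is to prove Conjecture~\ref{conj11} for the class of amphicheiral knots that still requires treatment, namely the closures of braids of the form $(ww^*)^n$ with $n$ even; as sketched in the introduction, the symmetry classification of hyperbolic knots reduces the full conjecture to exactly this case, since the dihedral case and the odd-$n$ cyclic case each produce an orientation-reversing involution, so that Hartley--Kawauchi already gives an honest splitting over $\Z$. By Theorem~\ref{theorem11} it then suffices to verify condition~(2), that $C(z)C(iz)\equiv C(z^2)\bmod 4$, equivalently to exhibit an integer polynomial $f$ with $C(z)\equiv f(z)f(-z)\bmod 4$, which is statement~(3).

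First I would set up the Burau machinery. Writing $\bar\rho$ for the reduced Burau representation on $k$ strands, the Alexander polynomial of a braid closure $\widehat{\beta}$ is, up to units $\pm t^{a}$, equal to $\det(\bar\rho(\beta)-I)/(1+t+\cdots+t^{k-1})$, and after the substitution $t^{1/2}-t^{-1/2}=z$ this computes $C(z)$. The essential structural input is a mirror lemma: passing from $w$ to $w^*$ reverses every crossing, and at the level of the reduced Burau matrix this corresponds to $t\mapsto t^{-1}$, that is, to $z\mapsto -z$, up to a fixed conjugation arising from the Squier unitary structure. Granting this, if $M(z)$ denotes the symmetrized Burau matrix of $w$, then that of $w^*$ is $M(-z)$ up to conjugation, so a full period $v=ww^*$ has matrix $N(z)=M(z)M(-z)$ up to conjugation, and $C(z)\doteq\det(N(z)^n-I)/\mathrm{norm}(z)$.

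The engine of the argument is the hypothesis that $n$ is even. I would factor this determinant over roots of unity, organized around the half-period: formally $N(z)^n=L^{2n}$, where $L$ is the twisted operator $M(z)\tau$ built from the mirror automorphism $\tau\colon z\mapsto -z$ (so $\tau^2=\mathrm{id}$ recovers $L^2=M(z)M(-z)=N(z)$), and the natural factorization then runs over the $2n$-th roots of unity $\xi=e^{\pi i/n}$. Because $n$ is even, $\xi^{n/2}=i$ is among them, so the roots group into quadruples $\{\xi^j,\,i\xi^j,\,-\xi^j,\,-i\xi^j\}$, each contributing a factor $\det(L^4-\xi^{4j}I)$ whose four members are to correspond to the substitutions $z,iz,-z,-iz$. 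This is precisely the combinatorics behind the identity $f(z)f(-z)f(iz)f(-iz)\equiv f(z^2)f(-z^2)\bmod 4$ used in the proof of Theorem~\ref{theorem11}: collapsing the cross terms by $(A+B)^2\equiv(A-B)^2\bmod 4$ turns each quadruple product into a perfect square modulo $4$, and reassembling over $j$ yields the congruence $C(z)C(iz)\equiv C(z^2)\bmod 4$ required by Theorem~\ref{theorem11}(2).

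The hard part will be making the eigenvalue-level pairing by multiplication by $i$ genuinely correspond to the variable substitution $z\mapsto iz$: the roots $\xi^j$ are constants, whereas $C(iz)$ substitutes into the polynomial, and bridging the two requires controlling how $\det(N(z)-\lambda I)$ transforms under simultaneous changes of $\lambda$ and $z$ through the functional relationship built into $N(z)=M(z)M(-z)$. A secondary but genuine difficulty is the normalization factor $1+t+\cdots+t^{k-1}$ together with the unit ambiguity $\pm t^{a}$: I must check that these behave compatibly under $z\mapsto iz$ and $z\mapsto z^2$ modulo $4$, so that clearing them preserves the congruence rather than destroying it. Finally I would prove the mirror lemma itself rigorously, since the entire argument rests on identifying $w^*$ with the $z\mapsto -z$ conjugate of $w$ in the reduced Burau representation.
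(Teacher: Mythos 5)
The first and decisive problem is that the statement you set out to prove is Conjecture~\ref{conj11}, which the paper does not prove and explicitly leaves open, and your opening reduction is exactly where the argument breaks. You assert that the symmetry classification of hyperbolic knots reduces the full conjecture to closures of braids of the form $(ww^*)^n$ with $n$ even. What the paper actually uses is weaker: after Theorem~\ref{theorem13} reduces the conjecture to hyperbolic positive amphicheiral knots, the remaining cyclic-symmetry case is realized by closures of $(T\cdot T^*)^n$ for a \emph{tangle} $T$, not a braid, and the paper states in so many words that for a general tangle the Burau method ``is unavailable, so a new technique is needed.'' Consequently even a complete treatment of the braid case --- which is precisely what the paper's Theorem~\ref{theorem15} and the corollary following it provide --- does not establish Conjecture~\ref{conj11}. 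Your proposal silently replaces ``tangle'' by ``braid'' and therefore claims far more than it can deliver.

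Within the braid case your route also diverges from the paper's and contains gaps you only partly acknowledge. The object $L=M(z)\tau$ is a semilinear operator, so $\det(L^{2n}-I)$ does not factor over roots of unity the way the characteristic polynomial of an honest matrix does, and nothing in the proposal bridges the eigenvalue rotation $\xi\mapsto i\xi$ with the variable substitution $z\mapsto iz$ in the Conway polynomial; that correspondence is the entire content of the argument, not a technicality to be checked later. The ``mirror lemma'' is likewise unproven, and since $w^*=g_1^{-1}\cdots g_n^{-1}$ is not $w^{-1}=g_n^{-1}\cdots g_1^{-1}$, one cannot simply invoke Squier unitarity to identify $\beta(w^*)$ with a $t\mapsto t^{-1}$ conjugate of $\beta(w)$. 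The paper's actual mechanism for the braid case is different and more elementary: Lemmas~\ref{lemma11} and~\ref{lemma12} show $\Re(ww^*)$ is conjugate to $(ww^*)^{-1}$, whence the characteristic polynomial of $\beta((ww^*)^k)$ --- being the two-variable Alexander polynomial of the closure together with the braid axis --- is palindromic; evaluating at $\lambda=\pm1$ yields $\det(I-\beta((ww^*)^k))+(-1)^n\det(I+\beta((ww^*)^k))\equiv 0\bmod 4$, and the factorization $\det(I-A^{2k})=\det(I-A^k)\det(I+A^k)$ then drives an induction on powers of $2$, with Lemma~\ref{LucasFac} controlling the denominator $1+t+\cdots+t^{2n}$ modulo $4$. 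If you want to salvage your sketch, either adopt that argument and state honestly that the general (tangle) case remains open, or supply the missing semilinear-operator and substitution correspondence --- but the latter would be new mathematics, not a reconstruction of anything proved in this paper.
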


Surprisingly, Conjecture \ref{conj11} is very close to a theorem proved by A. Kawauchi and R.I. Hartley :
\begin{theorem}\label{theorem12}
\cite{hartley1, hartley2, kawauchi1}Let $C(z)$ be the Conway polynomial of a knot $K$. If $K$ is a negative amphicheiral knot then $C(z)=f(z)f(-z)$ for some $f\in \Z[z]$. If $K$ is a strongly positive  amphicheiral knot then $C(z)=f(z^2)^2$ for some $f\in \Z[z]$.  
\end{theorem}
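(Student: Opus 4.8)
The plan is to convert each amphicheiral symmetry into an algebraic symmetry of a Seifert matrix (or of the Alexander module) and to read the factorization off from it. I will use the normalized formula $C(z)=\det(t^{1/2}V-t^{-1/2}V^{T})$, where $V$ is a $2g\times 2g$ Seifert matrix of $K$ and $z=t^{1/2}-t^{-1/2}$. The two conclusions are the two natural symmetry types of $C$: ``$C(z)=f(z)f(-z)$'' says $C$ is a norm for $z\mapsto -z$, while ``$C(z)=f(z^{2})^{2}$'' says $C$ is a perfect square. These will come from two genuinely different induced symmetries, so the two cases call for different tools.

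For the strongly positive case I would first upgrade the orientation-reversing involution $\tau$ to one that carries a Seifert surface to itself. Since $\tau$ preserves the orientation of $K$, its restriction to $K$ is fixed-point free, so $\mathrm{Fix}(\tau)$ is a two-point set disjoint from $K$; an equivariant surgery argument then yields a Seifert surface $S$ with $\tau(S)=S$ and $\tau|_{S}$ orientation preserving. Letting $P$ be the involutory integer matrix of $\tau_{*}$ on $H_{1}(S)$, the fact that $\tau$ reverses the ambient orientation while preserving that of $S$ interchanges the two push-offs and yields $P^{T}VP=-V^{T}$ with $P^{2}=I$. Splitting $H_{1}(S)\otimes\mathbb{Q}$ into the $\pm 1$-eigenspaces of $P$ puts $V$ in block form with antisymmetric diagonal blocks, and a short computation then shows that $M:=t^{1/2}V-t^{-1/2}V^{T}$ satisfies $JMJ=-M^{T}$ for $J=\mathrm{diag}(-I,I)$, so that $JM$ is antisymmetric. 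Hence $C(z)=\det M=\pm\,\mathrm{Pf}(JM)^{2}$; tracking the behaviour of the Pfaffian under $t\mapsto t^{-1}$ (that is, $z\mapsto -z$) shows it is a polynomial in $z^{2}$, and normalizing by $C(0)=1$ fixes the sign, giving $C(z)=f(z^{2})^{2}$ with $f\in\mathbb{Z}[z]$.

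For the negative case no involution is available in general, so I would instead work with the Alexander module $A=H_{1}(\widetilde{X})$ over $\Lambda=\mathbb{Z}[t,t^{-1}]$ and its Blanchfield pairing $b$. Because a negative-amphicheiral symmetry reverses the orientation of $K$ it fixes the meridian, hence induces a genuinely $\Lambda$-linear automorphism $\phi$ of $A$; because it reverses the orientation of $S^{3}$ it negates the pairing, so $b(\phi x,\phi y)=-b(x,y)$ and $\phi$ is an anti-isometry. I would then reduce ``$C(z)=f(z)f(-z)$'' to the purely algebraic statement that every even irreducible factor of $C(z)$ (one fixed by $z\mapsto -z$) occurs to an even power; the remaining irreducible factors automatically come in conjugate pairs, since $C$ is even, a property shared by every knot. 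The anti-isometry is exactly what supplies this: on the self-conjugate part of the form it exhibits $b$ as isometric to $-b$, forcing that part to be metabolic and the corresponding factor to appear squared. A final Gauss-lemma step upgrades the resulting $\mathbb{Q}$-coefficient factorization to one with $f\in\mathbb{Z}[z]$.

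The main obstacle is this last step in the negative case. The positive case is, once the equivariant surface is secured, a finite and self-contained linear-algebra computation in which integrality of $f$ is automatic from the Pfaffian. The negative case is where the real content lies: one must analyze the Blanchfield (or Seifert) form summand by summand over a ring carrying the conjugation $t^{1/2}\mapsto t^{-1/2}$, and show that admitting an anti-isometry kills the odd part of every self-conjugate factor — equivalently, a Witt-group statement $2[b]=0$ refined to the level of individual factors. I would expect to spend most of the effort on this linking-form lemma and on checking that the passage between $\mathbb{Z}$- and $\mathbb{Q}$-coefficients, complicated by the half-integer powers of $t$ inherent in the Conway normalization, does not destroy the integrality needed to produce $f$.
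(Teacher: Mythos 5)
The paper does not actually prove Theorem~\ref{theorem12}: it is quoted from Hartley, Hartley--Kawauchi and Kawauchi, so there is no internal argument to compare yours against; I can only measure your sketch against those references. Your strongly positive case is close in spirit to a workable argument: the relation $P^{T}VP=-V^{T}$ is correct, the block computation making $JM$ antisymmetric checks out, and the Gauss-lemma descent from $\mathbb{Q}[z]$ to $\mathbb{Z}[z]$ is fine. Two points there still need real work: the existence of a $\tau$-invariant Seifert surface is not free (though it can be obtained by averaging an $S^{1}$-valued map, using that $\tau$ has order $2$ and acts by $-1$ on $H^{1}$ of the complement, then taking a regular fiber missing the two fixed points); and the Pfaffian lives a priori in $\mathbb{Q}[z,u]$ with $u=t^{1/2}+t^{-1/2}=\sqrt{z^{2}+4}$, which is \emph{not} a polynomial in $z$, so you must separately rule out the possibility $C(z)=(z^{2}+4)q(z)^{2}$ before concluding $C(z)=f(z^{2})^{2}$. (Hartley--Kawauchi avoid the equivariant surface entirely and work with the lifted involution on the infinite cyclic cover.)

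The genuine gap is in the negative case, at its central step. From $b(\phi x,\phi y)=-b(x,y)$ you conclude that the self-conjugate part of the form is metabolic; but a nondegenerate form isometric to its negative is only $2$-torsion in the Witt group, and such classes can be represented by anisotropic forms (already over $\mathbb{F}_{3}$, the form $\langle 1,1\rangle$ is anisotropic yet isometric to $\langle -1,-1\rangle$, having the same rank and discriminant). Concretely, a cyclic module $\Lambda/(p)$ with $p$ self-conjugate can carry a nonsingular pairing admitting an anti-isometry, namely multiplication by a unit $u$ with $u\bar u=-1$, and such a summand would contribute a self-conjugate factor with multiplicity one; so the anti-isometry alone cannot force even multiplicities. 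This is exactly why the literature needs more: in the \emph{strongly} negative case Hartley--Kawauchi exploit the fact that the anti-isometry squares to (a power of $t$ times) the identity, which equips the relevant isotypic components with a quaternion-type structure forcing even multiplicity; and for general negative amphicheiral knots, where no periodicity of the symmetry is available, Hartley does not argue on the Blanchfield form at all but decomposes the complement along its characteristic (JSJ) pieces, applies the strong case to each piece, and reassembles the splitting through the multiplicativity of the Alexander polynomial under companionship --- the same reduction scheme this paper imitates in the proof of Theorem~\ref{theorem13}. Without either the involution or the JSJ reduction, your Witt-group argument does not close.
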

This proves that Conjecture \ref{conj11} is true for all $(-)$ and strongly $(+)$ amphicheiral knots. From Hartley and Kawauchi's theorem, it is natural to formulate the following strengthening of Conjecture~\ref{conj11}.
\begin{conjecture}\label{conj12}
For any amphicheiral knot $K$, $C_K(z)=\phi(z)\phi(-z)$ for some $\phi \in \mathbb{Z}[z]$.
\end{conjecture}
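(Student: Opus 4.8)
The plan is to attack Conjecture~\ref{conj12} through the same reduction the introduction sets up for Conjecture~\ref{conj11}: it suffices to treat hyperbolic amphicheiral knots, whose symmetry group is cyclic or dihedral and must contain an orientation-reversing element $h$. I would first dispose of every case in which $h$, or a power of $h$, is an orientation-reversing \emph{involution}. Such an involution always exists in the dihedral case, and in the cyclic case $\Z_{2n}$ it exists whenever $n$ is odd, since then $h^n$ is an orientation-reversing involution. In all of these situations the knot is strongly amphicheiral, so Theorem~\ref{theorem12} applies directly: if $K$ is negative amphicheiral then $C(z)=f(z)f(-z)$ is already of the required shape, and if $K$ is strongly positive amphicheiral then $C(z)=f(z^2)^2$, so setting $\phi(z)=f(z^2)$, which is even, gives $\phi(z)\phi(-z)=\phi(z)^2=C(z)$. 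Thus Hartley--Kawauchi settles Conjecture~\ref{conj12} for all knots carrying an orientation-reversing involution.

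The remaining, genuinely new, case is cyclic symmetry $\Z_{2n}$ with $n$ \emph{even}, where $h^n$ is orientation preserving and no orientation-reversing involution is available. As the introduction recalls, every such knot is the closure of a concatenation $(T\cdot T^*)^n$, and I would restrict first to the braid subcase $T=w$, aiming to read the factorization off the Burau representation. The Alexander polynomial of the closure of $(ww^*)^n$ is governed by $\det\bigl(I-\bar\beta(t)\bigr)$ with $\bar\beta=\bigl(\bar w(t)\,\overline{w^*}(t)\bigr)^{n}=M^n$, where the mirror operation relates $\overline{w^*}(t)$ to $\bar w(t)$ through the substitution $t\leftrightarrow t^{-1}$; under the change of variable $z=t^{1/2}-t^{-1/2}$ this substitution is exactly $z\mapsto -z$, which is precisely why a splitting $\phi(z)\phi(-z)$ is plausible in the first place. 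The strategy is then to exploit the $n$-fold power: writing $\det\bigl(I-M^n\bigr)$ as a product over $n$-th roots of unity and pairing each factor with its $h$-conjugate should organize the Conway polynomial into a product of the form $\phi(z)\phi(-z)$, with $\phi$ assembled from $M$ and the cyclotomic data.

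The hard part will be establishing that the factor $\phi$ can be taken to lie in $\Z[z]$ rather than merely in an extension $\Z[\zeta_n][z]$. The pairing-over-roots-of-unity construction of the previous paragraph produces factors that are a priori defined over $\Z[\zeta_n]$, and collapsing the Galois orbit of $\zeta_n$ back to $\Z$ is exactly the step at which integrality can be lost; it is no accident that the mod-$4$ identities $(A+B)^2\equiv(A-B)^2$ underlying Theorem~\ref{theorem11} are precisely what let one absorb such a discrepancy when only a splitting over $\Z_4$ is demanded. Concretely, I would need to show that the real and imaginary parts coming from each conjugate pair of cyclotomic factors reassemble into a single \emph{integer} polynomial. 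I expect the principal obstruction of the whole argument to concentrate here, in the non-strong positive case with $n$ even --- the one case not reached by Hartley--Kawauchi --- and overcoming it is the crux: one must rule out the cancellations that would force the factors off $\Z[z]$. Should that integrality step succeed for braids $(ww^*)^n$, the same eigenvalue bookkeeping should then extend to a general tangle $T$, with the Burau matrix replaced by the appropriate equivariant Seifert or Blanchfield data.
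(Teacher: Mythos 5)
Your proof cannot be completed, because the statement is false: Conjecture~\ref{conj12} appears in the paper only to be refuted, and the paper's treatment of it consists of explicit counterexamples rather than a proof. The Ermotti--Hongler--Weber knot \cite{ermotti1} is an alternating positive amphicheiral knot whose Conway polynomial $(4z^8+16z^6+12z^4-16z^2+1)(1+z)(1-z)(2z^4-1)^2$ does not factor as $\phi(z)\phi(-z)$ for any $\phi\in\Z[z]$. Worse for your plan, the paper's second counterexample lands exactly in the subcase you isolate as the crux: for the $5$-strand braid $w=\sigma_1^2\sigma_2\sigma_1^{-1}\sigma_2\sigma_1\sigma_3^{-1}\sigma_2\sigma_4\sigma_3^{-1}\sigma_4^2$, the closure of $ww^*ww^*$ --- a positive, non-strongly amphicheiral knot of the form $(ww^*)^n$ with $n=2$ even, built from a braid --- has Conway polynomial $(1+z^2)^2(1+3z^2)^2(1-11z^2+33z^4+8z^6)$, whose leading coefficient is $72$. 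Any factorization $C(z)=\phi(z)\phi(-z)$ forces the leading coefficient to be a square up to sign, so no such $\phi\in\Z[z]$ exists for this knot.

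Your structural analysis is otherwise sound: the reduction to hyperbolic knots, and the disposal via Theorem~\ref{theorem12} of every case admitting an orientation-reversing involution, correctly leaves cyclic symmetry $\Z_{2n}$ with $n$ even as the only open case. But the integrality step you flag as the ``principal obstruction'' is not merely hard --- it is impossible, since the cancellations you hoped to rule out genuinely occur. What survives the obstruction is precisely the mod-$4$ statement, Conjecture~\ref{conj11}, where the identity $(A+B)^2\equiv(A-B)^2\bmod 4$ absorbs the discrepancy you describe; that weaker splitting is what the paper actually proves for the braid family $(ww^*)^{2^m}$ via the Burau representation (Theorem~\ref{theorem15}), and it, not Conjecture~\ref{conj12}, is the correct target of the strategy you outline.
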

However, this conjecture is {\bf false.} A counterexample was found in 2012 by N. Ermotti, C. V. Q. Hongler, C. Weber \cite{ermotti1}.  Figure~\ref{fig11} reproduced from \cite{ermotti1} shows this counterexample knot.
\begin{figure}[h!]
	\centering
	\includegraphics[width=6cm]{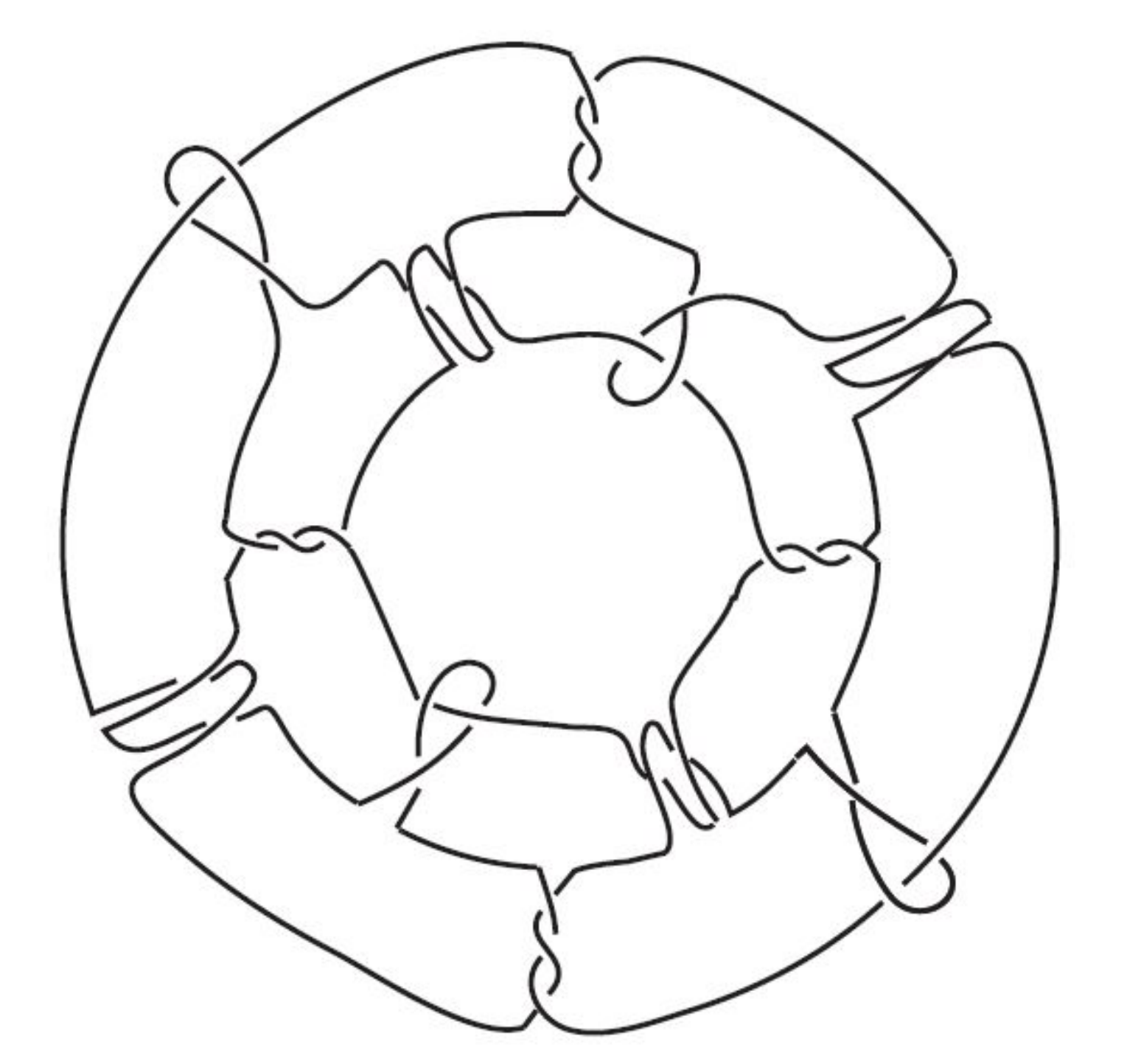}
	  \caption{The Ermotti-Hongler-Weber knot}\label{fig11}
\end{figure}
This knot is an alternating positive amphicheiral knot with Conway polynomial $C(z)=(4z^8+16z^6+12z^4-16z^2+1)(1 + z)(1-z)(2z^{4}-1)^2$ which does not split as $f(z)f(-z)$ for any $f(z)\in \mathbb{Z}[z]$. Thus it is indeed a counterexample to Conjecture \ref{conj12}. However note that $C(z)\equiv(1+z)(1-z)(2z^4-1)^2\equiv \phi(z)\phi(-z)\mod 4$. 

We have now produced many counterexamples to Conjecture \ref{conj12}. A nonalternating counterexample is given in Figure \ref{fig12}.
\begin{figure}[h!]
	\centering
	\begin{minipage}{5cm}\includegraphics[width=5cm]{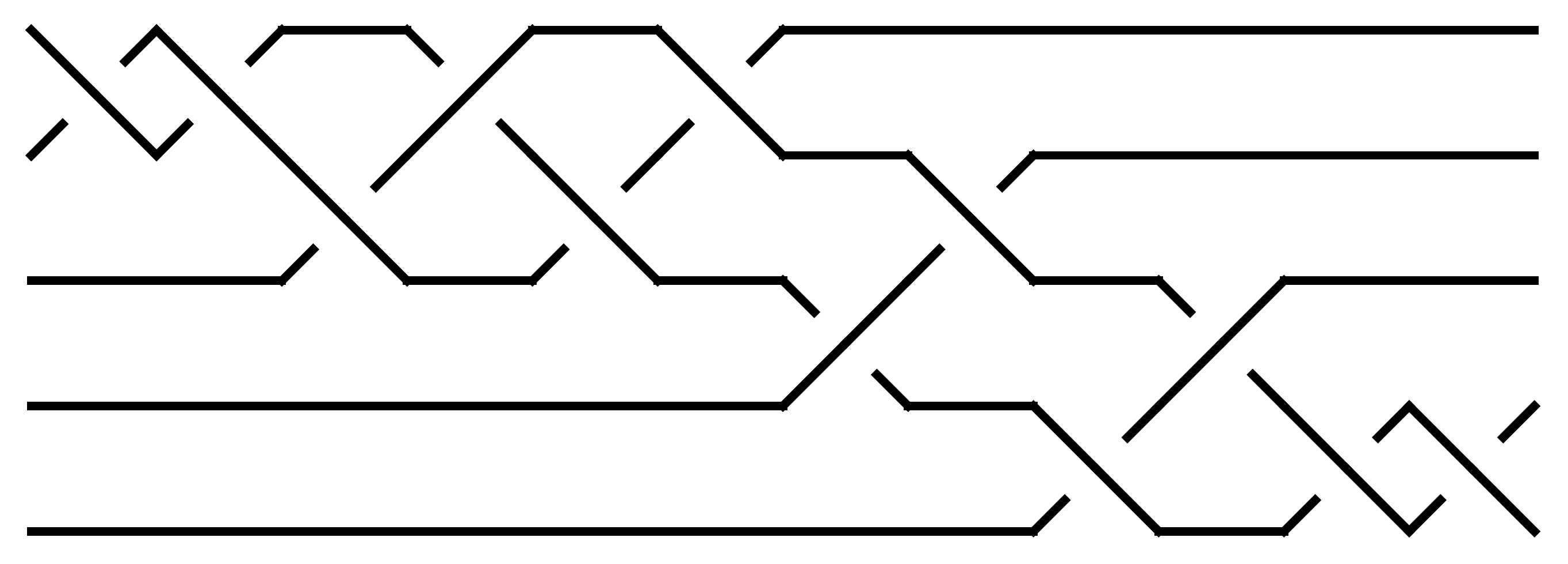}
	\end{minipage} \,\,\,\,\, \begin{minipage}{5cm}\includegraphics[width=5cm]{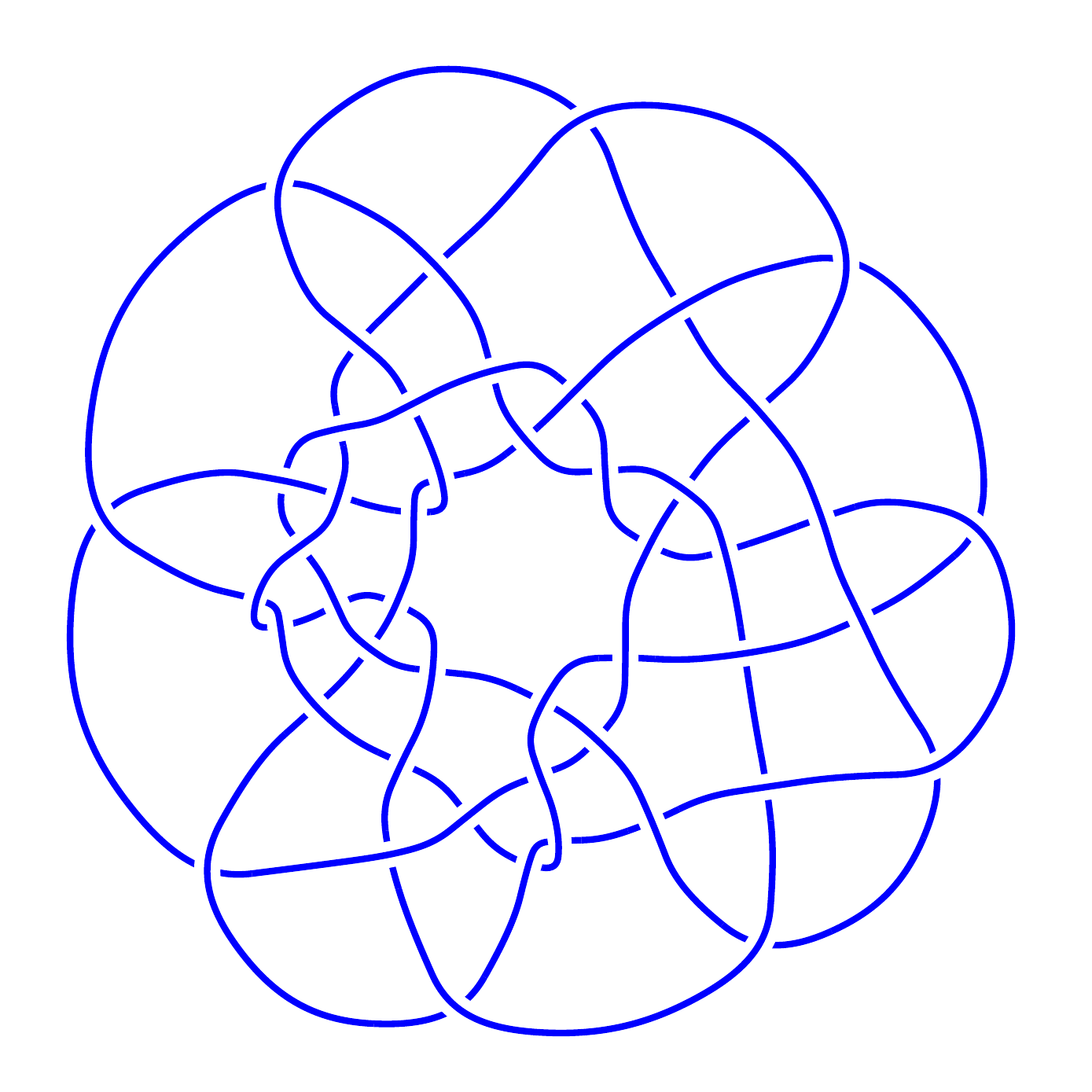}\end{minipage}
	\caption{The braid $w$ and its closure}\label{fig12}
\end{figure}
Let $w=\sigma_1^2\sigma_2\sigma_1^{-1}\sigma_2\sigma_1\sigma_3^{-1}\sigma_2\sigma_4\sigma_3^{-1}\sigma_4^2$ be the 5-strand braid in Figure \ref{fig12}, then the Conway polynomial of the closure of $ww^*ww^*$ is: $C_K(z)=(1+z^2)^2(1+3z^2)^2(1-11z^2+33z^4+8z^6)$ where $w^*$ is the braid $w$ with all crossings reversed. Thus it is a counterexample to the Conjecture \ref{conj12} because braids of this form are amphicheiral (see section \ref{sect1}). The knot is nonalternating due to the fact that the absolute value of the leading coefficient of the Conway polynomial of an alternating amphicheiral knot is square \cite{St1}. Still, this is not a counterexample to Conjecture \ref{conj11} because, $C_K(z)\equiv(1-z+z^2)(1+z+z^2)(1+z^4)^2 \mod 4$. 



Hartley \cite{hartley1} shows that $C_K(z)=\phi(z)\phi(-z)$ for all negative amphicheiral knots. His proof takes as input his joint result with Kawauchi \cite{hartley2}, that strongly negative amphicheiral knots have this property. 
Hartley then proves the general case by breaking up the knot complement into pieces which are hyperbolic or Seifert fibred via the JSJ decomposition. 
One can mimic this argument to show that Conjecture ~\ref{conj11} follows if it can be proven for hyperbolic positive amphicheiral knots. 

\begin{theorem}\label{theorem13}
If $C_K(z)=\phi(z)\phi(-z)\mod 4$ for all hyperbolic positive amphicheiral knots then $C_K(z)=\phi(z)\phi(-z)\mod 4$ for all amphicheiral knots. 
\end{theorem}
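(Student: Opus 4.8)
The plan is to mimic Hartley's proof \cite{hartley1} of the analogous reduction over $\Z$ for negative amphicheiral knots, carrying the whole argument modulo $4$ and feeding the hyperbolic positive case in as the new base input. Observe first that two classes are already unconditional. By Theorem~\ref{theorem12}, a negative amphicheiral knot has $C_K(z)=f(z)f(-z)$, which is literally a split, and a strongly positive amphicheiral knot has $C_K(z)=f(z^2)^2=\phi(z)\phi(-z)$ with $\phi(z)=f(z^2)$ (using that $f(z^2)$ is even in $z$). Hence only non-strong positive amphicheiral knots remain, and along the way we may treat any negative or strongly amphicheiral piece as already split. The key structural input is that the split-mod-$4$ property is multiplicative: if $C_{K_1}\equiv\phi_1(z)\phi_1(-z)$ and $C_{K_2}\equiv\phi_2(z)\phi_2(-z)$, then $C_{K_1}C_{K_2}\equiv\psi(z)\psi(-z)$ with $\psi=\phi_1\phi_2$. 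The proof is therefore an induction over the JSJ (companionship) decomposition of $S^3\setminus K$, whose pieces are hyperbolic or Seifert fibred, peeling off one companion torus at a time.

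I would first dispose of connected sums. If $K=K_1\#\cdots\#K_m$ then $C_K=\prod_i C_{K_i}$, and an orientation-reversing symmetry of $K$ permutes the prime summands, sending each $K_i$ to $\overline{K_{\sigma(i)}}$ for some permutation $\sigma$. A two-cycle $\{i,\sigma(i)\}$ contributes $C_{K_i}C_{\overline{K_i}}=C_{K_i}(z)^2=\phi(z)\phi(-z)$ with $\phi=C_{K_i}$, using that the Conway polynomial is mirror-invariant and even. A fixed summand ($\sigma(i)=i$) is itself a prime amphicheiral knot of strictly smaller JSJ complexity, handled by induction. By multiplicativity the product splits, so we may assume $K$ is prime.

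For prime $K$ we use the canonical JSJ decomposition together with the fact that the orientation-reversing symmetry can be isotoped to preserve it (equivariant torus decomposition, via geometrization of finite symmetries). If $K$ is hyperbolic we are done: the positive case is the hypothesis and the negative or strong case is Theorem~\ref{theorem12}. If $K$ is a torus knot it is chiral unless trivial, so this case is vacuous. Otherwise $K$ is a satellite with companion $J$ and pattern $P$ of winding number $w$ in a solid torus $V$, and the symmetry induces orientation-reversing symmetries on both $J$ and $(V,P)$, so by induction $C_J$ and $C_P$ split mod $4$. The Conway polynomial obeys the satellite formula $C_K(z)=C_P(z)\,C_J(z_w)$, where $z_w$ is the Conway variable attached to $t\mapsto t^w$, i.e. $z_w=t^{w/2}-t^{-w/2}$ for $z=t^{1/2}-t^{-1/2}$. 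When $w$ is odd, $z_w$ is an odd polynomial in $z$ (so $z_w|_{z\mapsto-z}=-z_w$), whence $C_J(z_w)\equiv\psi(z_w)\psi(-z_w)=\Phi(z)\Phi(-z)\pmod 4$ with $\Phi(z)=\psi(z_w)\in\Z[z]$; combined with the pattern split this settles the odd case.

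The main obstacle is the even winding number case. Here $z_w$ is not a polynomial in $z$ but lies in $\Z[z,\sqrt{z^2+4}]$, so the substitution no longer carries a split of $C_J$ to a split of $C_J(z_w)$ over $\Z[z]$. In fact the companion factor alone need not split: taking $J$ the figure eight, with $C_J(z)=1-z^2$, and $w=2$, one computes $z_2^2=z^2(z^2+4)$, so $C_J(z_2)=1-z^2(z^2+4)\equiv 1-z^4\pmod 4$, and a short check on coefficients (as in the proof of Theorem~\ref{theorem11}) shows $1-z^4$ is not of the form $\Phi(z)\Phi(-z)$ mod $4$. Thus the even case cannot be handled factor by factor, and this is precisely where the passage from $\Z$ to $\Z_4$ is forced (compare the Ermotti--Hongler--Weber example \cite{ermotti1}). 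The remaining work --- and the heart of the argument --- is to show that the induced symmetry couples the pattern and companion so that their combined contribution $C_P(z)\,C_J(z_w)$ becomes a perfect square of an even polynomial modulo $4$ (equivalently, that an even-winding companion is constrained to a strongly amphicheiral type, contributing $[f(z_w^2)]^2$, or else occurs squared), invoking the identity $(A+B)^2\equiv(A-B)^2\pmod 4$ exactly as in Theorem~\ref{theorem11}. Establishing this coupling in the non-strong positive case is the step I expect to require the most care.
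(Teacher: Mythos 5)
You have correctly reconstructed the shape of the argument --- multiplicativity of splitting mod $4$, reduction to prime satellites via the equivariant torus decomposition, and the observation that for odd winding number the substitution $z\mapsto z_w$ is an odd polynomial in $z$ and hence carries a splitting of $C_J$ to one of $C_J(z_w)$ --- and your check that $C_J(z_2)\equiv 1-z^4\pmod 4$ fails to split for the figure eight correctly shows the even case cannot be handled companion-by-companion. But the proposal stops exactly at the crux: you conjecture that an even-winding companion is ``constrained to a strongly amphicheiral type \dots or else occurs squared'' and defer the proof of that coupling. That deferred step is a genuine gap, and it is essentially the entire content of the theorem beyond bookkeeping.

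The paper closes this gap not by analyzing the pattern--companion coupling directly but by quoting Theorem 3.1 of Hartley \cite{hartley1}, which already packages the needed structure: the Alexander polynomial of an amphicheiral knot factors as $\prod_i\bigl(g_i(t^{\alpha_i})\bigr)^{\gamma_i}$ where each $g_i$ is the Alexander polynomial of a knot $K_i$, and whenever the exponent $\gamma_i$ is \emph{odd}, the knot $K_i$ is hyperbolic amphicheiral and $\alpha_i$ is $0$ or odd. In other words, the even-$\alpha_i$ (even winding number) contributions occur only with even multiplicity $\gamma_i$, hence are perfect squares and split trivially (over $\Z$, let alone $\Z_4$); the odd-$\gamma_i$ factors are precisely where the hypothesis on hyperbolic positive amphicheiral knots, together with Theorem~\ref{theorem12} for the negative and strongly positive cases, is fed in, and the odd substitution $t\mapsto t^{\alpha_i}$ preserves splitting exactly as you observed. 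So your plan is salvageable, but only by importing (or reproving) Hartley's structural theorem; without it the even-winding-number case remains open and the proof is incomplete.
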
 
\begin{proof}
First we note that the condition that $C_K(z)=\phi(z)\phi(-z)\mod 4$ is equivalent to the condition that $\Delta_{K}(t^2)=s(t)s(t^{-1})\in\mathbb Z_4[t^{\pm 1}]$ where $s(t)=s(-t^{-1})$. So we may as well work with Alexander polynomials. The proof of Theorem 3.1 of \cite{hartley1} shows that the Alexander polynomial of an amphicheiral knot $K$ can be written as a finite product
$$\Delta_K(t)=\prod_{i} (g_i(t^{\alpha_i}))^{\gamma_i}$$
where each $g_i(t)$ is the Alexander polynomial of a knot $K_i$. If $\gamma_i$ is odd, then the knot $K_i$ is hyperbolic and amphicheiral and $\alpha_i$ is either $0$ or odd. By hypothesis, the Alexander polynomials in this odd case split over $\mathbb Z_4$, and the proof in \cite{hartley1} goes through.
\end{proof}
In fact, every hyperbolic positive amphicheiral knot is \emph{periodically amphicheiral} by Mostow rigidity. This just means that the orientation reversing homeomorphism $h\colon S^3\to S^3$ which realizes the knot's amphicheirality is a finite order homeomorphism.


So the question is whether $C(z)\equiv f(z)f(-z)\mod 4$ for periodically positive amphicheiral knots. If yes, then we have solved Conjecture \ref{conj11} affirmatively.

\section {The Alexander polynomial and the Burau representation}\label{sect1}
Since we are focusing on periodically positive amphicheiral knots, we need a method to build these knots systematically. Let $w$ be a $(2n+1)$-braid, and $w^*$ be the mirror image of the braid $w$ (which means all crossings are reversed). Then the closure $ww^*$ is a strong positive amphicheiral knot and the closure $ww^*ww^*$ is a periodically positive amphicheiral knot. 
\begin{figure}[h!]
	\centering
	{\includegraphics[width=10cm]{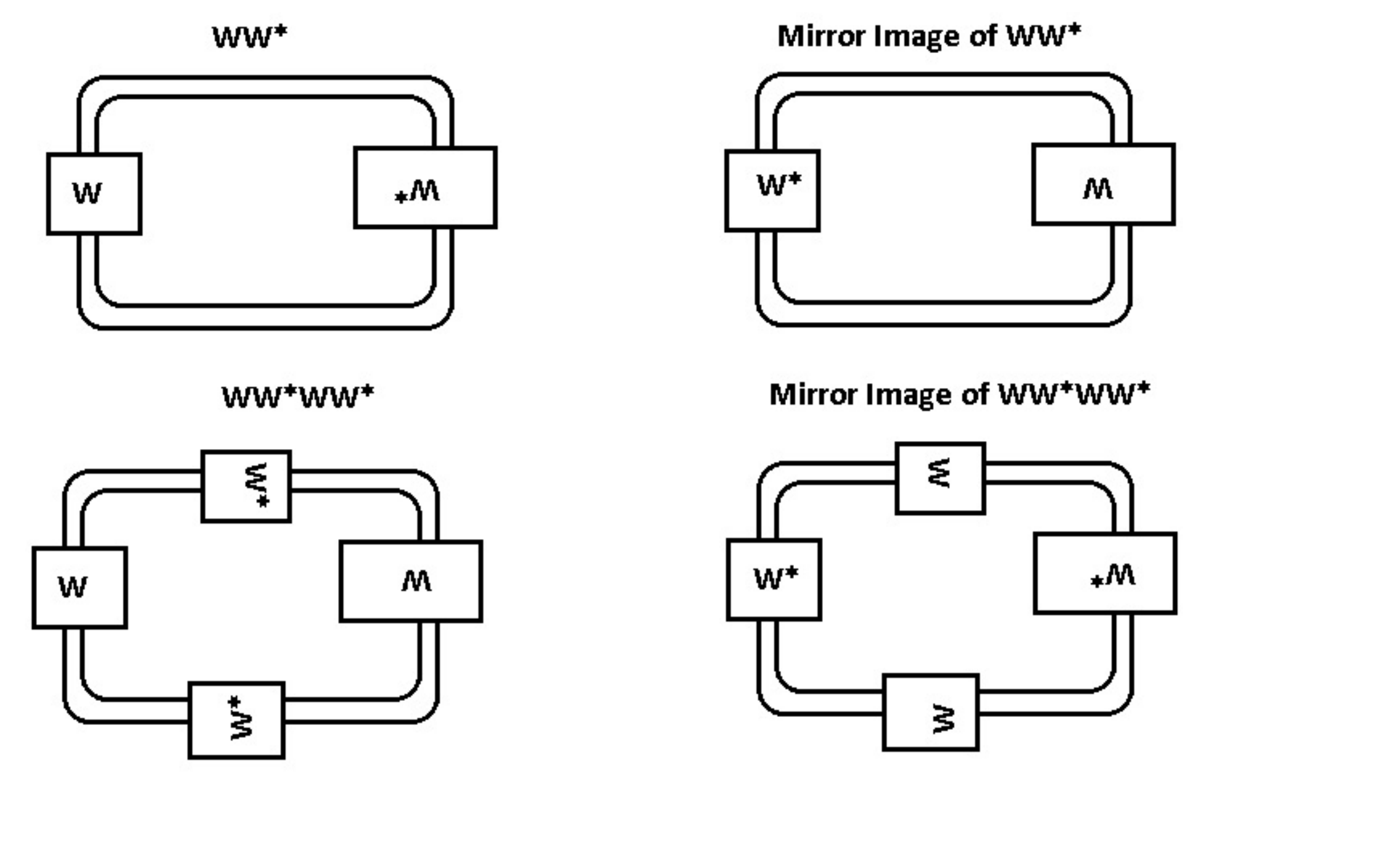}}
	  \caption{Braids $ww^*$ and $ww^*ww^*$ }\label{fig13}
\end{figure}

\begin{definition}
\cite{bir1} Let $\sigma_i$ denote the standard generator of the braid group $B_n$ with $n\ge 3$. Then the reduced Burau representation of $B_n$, for $ n \ge 3$ is given by
\begin{align*}
\sigma_1 &\mapsto \left( \begin{array}{cc|c}-t & 1 & 0 \\ 0 & 1 & 0 \\ \hline 0 & 0 & I_{n-3} \end{array} \right), \\
\sigma_i &\mapsto \left( \begin{array}{c|ccc|c} I_{i-2} & 0 & 0 & 0 & 0 \\ \hline 0 & 1 & 0 & 0 & 0 \\ 0 & t & -t & 1 & 0 \\ 0 & 0 & 0 & 1 & 0 \\ \hline 0 & 0 & 0 & 0 & I_{n-i-2} \end{array} \right), \quad 2 \leq i \leq n-2,  \\
\sigma_{n-1} &\mapsto \left( \begin{array}{c|cc} I_{n-3} & 0 & 0 \\ \hline 0 & 1 & 0 \\ 0 & t & -t \end{array} \right),
\end{align*}
Here $I_k$ denotes the  $k\times k$ identity matrix. For $n=2$ it maps $\sigma_1 \mapsto \left( -t \right)$.
\end{definition}
The following is well known. 
\begin{theorem}\label{theorem14}
If a knot $K$ is a closure of a braid $\Psi$ in $B_n$ then the Alexander polynomial of the knot $K$ is given by 
\[
\Delta_K(t)=\frac{\det(I-\beta(\Psi))}{1+t+t^2+...t^{n-1}} \text{ where } \beta(\Psi) \text{ is Burau representation of } \Psi
\]
\end{theorem}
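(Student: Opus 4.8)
The plan is to realize the braid $\Psi \in B_n$ as an automorphism of a free group and then run Fox calculus on the resulting presentation of the knot group, identifying the abelianized Fox Jacobian with the Burau matrix. First I would recall Artin's theorem: a braid $\Psi \in B_n$ acts on the free group $F_n = \pi_1(D_n) = \langle x_1, \ldots, x_n \rangle$ of the $n$-punctured disk, with $\sigma_i$ sending $x_i \mapsto x_i x_{i+1} x_i^{-1}$, $x_{i+1} \mapsto x_i$, and fixing the other generators. When the closure $\hat\Psi$ is a knot, its group has the well-known presentation $\langle x_1, \ldots, x_n \mid x_i = \Psi_*(x_i),\ i = 1, \ldots, n \rangle$, coming from viewing the complement as the mapping torus of $D_n$ under $\Psi$ together with the gluing that caps off the braid axis.

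Next I would compute the Alexander matrix of this presentation by Fox calculus. Let $\phi \colon \Z[F_n] \to \Z[t^{\pm 1}]$ be induced by the abelianization $x_k \mapsto t$. The Alexander matrix has entries $\phi\bigl(\partial (\Psi_*(x_i) x_i^{-1})/\partial x_j\bigr)$, and a short computation (using $\phi(\partial x_i^{-1}/\partial x_j) = -t^{-1}\delta_{ij}$ and $\phi(\Psi_*(x_i)) = t$) shows this matrix is $\bar\beta(\Psi) - I_n$, where $\bar\beta$ is the \emph{unreduced} Burau representation. The heart of this step, which is really the origin of the Burau representation, is the identity that the abelianized Fox Jacobian of the Artin action equals $\bar\beta$; I would verify this on the generators $\sigma_i$ and extend it multiplicatively via the chain rule for Fox derivatives.

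Then I would pass from the unreduced to the reduced representation. The matrix $\bar\beta(\Psi)$ always preserves a distinguished one-dimensional subspace (spanned by $(1,t,\ldots,t^{n-1})^T$ in the usual convention), and the reduced Burau $\beta(\Psi)$ of the definition above is exactly the induced action on the complementary $(n-1)$-dimensional piece. Choosing a basis adapted to this splitting conjugates $\bar\beta(\Psi)$ into block-triangular form with diagonal blocks $\beta(\Psi)$ and $1$; this is what forces $\det(I_n - \bar\beta(\Psi)) = 0$ and simultaneously shows that the first elementary ideal of the presentation is generated, up to units $\pm t^k$, by $\det(I_{n-1} - \beta(\Psi))$. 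Extracting the Alexander polynomial from this ideal then introduces the normalizing denominator: the order of the relevant torsion module is $\det(I_{n-1} - \beta(\Psi))$ divided by $1 + t + \cdots + t^{n-1}$, the latter being the contribution of the split-off trivial summand.

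The main obstacle I anticipate is the bookkeeping in this last step: one must verify that $1 + t + \cdots + t^{n-1}$ genuinely divides $\det(I_{n-1} - \beta(\Psi))$ and that the quotient is the correctly normalized Alexander polynomial, tracking carefully the units $\pm t^k$ and the precise relationship between the singular $n \times n$ Fox matrix and the reduced $(n-1) \times (n-1)$ matrix. A useful sanity check along the way is the case $\Psi = \sigma_1^3 \in B_2$, whose closure is the trefoil: here $\beta(\Psi) = -t^3$, so $\det(I - \beta(\Psi)) = 1 + t^3$, and dividing by $1 + t$ gives $1 - t + t^2$, the Alexander polynomial of the trefoil up to the usual unit.
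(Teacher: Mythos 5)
The paper offers no proof of this statement at all: it is introduced with ``The following is well known'' and implicitly referred to Birman's book, so there is nothing internal to compare your argument against. Your outline is the standard derivation of this classical fact (Artin action on $\pi_1$ of the punctured disk, Fox calculus identifying the abelianized Jacobian of the presentation $\langle x_1,\dots,x_n \mid x_i = \Psi_*(x_i)\rangle$ with $\bar\beta(\Psi)-I_n$, then splitting off the invariant line to pass to the reduced representation and dividing out $1+t+\cdots+t^{n-1}$), and each step is correct as stated; the divisibility and unit-tracking you flag as the main obstacle is genuine but routine, and is exactly the content of the classical treatment. Your trefoil sanity check is also correct ($\beta(\sigma_1^3)=-t^3$, giving $(1+t^3)/(1+t)=1-t+t^2$). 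The only caveat worth recording is that the displayed equality in the theorem must be read up to the usual units $\pm t^k$, which your computation already makes visible.
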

Before we prove the main theorem of this section, we need a few results. Let $\Re$ be the operation on braids which reverses the order of a braid word. E.g. if $w=\sigma_1\sigma_3^2\sigma_2$ then $\Re(w)=\sigma_2\sigma_3^2\sigma_1$. If $L$ is a knot or link then let $\Re(L)$ be the knot or link with every strand orientation reversed. Let $w^*$ be the braid with all crossings reversed. Let $\Cl(w)$ denote the standard closure of a braid, and let $\hat{\Cl}(w)$ be the link $\Cl(w)\cup m$, where $m$ is an additional component representing the braid axis, oriented to link positively with $\Cl(w)$.

\begin{lemma}\label{lemma11}
For any braid $w$, $\hat{\Cl}(\Re(w))=\Re(\hat{\Cl}(w))$.
\end{lemma}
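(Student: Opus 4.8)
The plan is to realize the word-reversal operation $\Re$ by a concrete rigid motion of $S^3$ and then keep careful track of how that motion affects the orientations of both the closed braid and the braid axis. Recall that $\Re(w)=(w^*)^{-1}$, so passing from $w$ to $\Re(w)$ amounts to mirroring the braid (swapping all over/under information) and then turning it upside down. Geometrically this is exactly the rotation $R$ by $\pi$ about the horizontal axis $\ell$ lying in the plane of the braid diagram, $R\colon(x,y,z)\mapsto(x,-y,-z)$, where $y$ is the direction along the braid and $z$ is the over/under direction. First I would set up the standard closed-braid picture, with the strands oriented upward and the braid axis $m$ placed so as to be invariant under $R$ (legitimate, since we are free to choose a symmetric representative of the link type).

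Next I would verify diagrammatically that $R$ carries the closed-braid diagram of $w$ to that of $\Re(w)$. The top-to-bottom flip ($y\mapsto -y$) reverses the order in which the crossings are read, while the simultaneous over/under swap ($z\mapsto -z$) returns each generator $\sigma_i$ to itself rather than inverting it; the net effect on the braid word is precisely $\sigma_{i_1}^{e_1}\cdots\sigma_{i_k}^{e_k}\mapsto \sigma_{i_k}^{e_k}\cdots\sigma_{i_1}^{e_1}=\Re(w)$. Thus $R(\hat{\Cl}(w))$ and $\hat{\Cl}(\Re(w))$ agree as \emph{unoriented} links, with $R(m)=m$ setwise.

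The heart of the argument is the orientation bookkeeping. Since $R$ sends the upward direction $+y$ to $-y$, it reverses the orientation of every strand, so it carries $\Cl(w)$ to $\Cl(\Re(w))$ with the \emph{reversed} orientation. To pin down the orientation of the axis I would use that $R$ is orientation-preserving on $S^3$ and hence preserves all linking numbers: from $\mathrm{lk}(\Cl(w),m)>0$ we get $\mathrm{lk}(R(\Cl(w)),R(m))>0$, and because $R(\Cl(w))$ is the orientation-reverse of $\Cl(\Re(w))$, maintaining a positive linking number forces $R(m)$ to be the orientation-reverse of the positively-linking axis for $\Cl(\Re(w))$. Consequently $R$ reverses the orientation of \emph{both} components, i.e. $R(\hat{\Cl}(w))=\Re\bigl(\hat{\Cl}(\Re(w))\bigr)$ as oriented links.

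Finally, since $R$ is a rotation it lies on a path of rotations joining it to the identity, so it extends to an orientation-preserving homeomorphism of $S^3$ isotopic to the identity; therefore $R(\hat{\Cl}(w))=\hat{\Cl}(w)$ as oriented links. Combining the last two identities gives $\hat{\Cl}(w)=\Re(\hat{\Cl}(\Re(w)))$, and applying the involution $\Re$ to both sides yields $\Re(\hat{\Cl}(w))=\hat{\Cl}(\Re(w))$, as desired. I expect the main obstacle to be the orientation accounting for the braid axis, together with confirming that $m$ can be taken invariant under $R$; getting the diagrammatic conventions right so that $R$ produces $\Re(w)$ rather than the inverse braid $w^{-1}$ is the other point that must be handled with care.
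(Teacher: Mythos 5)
Your argument is correct, but it takes a noticeably heavier route than the paper does. The paper's entire proof is the direct observation that reversing every strand orientation of the closed braid forces you to read the braid word from the other end, which is literally $\Re(w)$, with the axis orientation taken care of because reversing both components preserves the linking number. You instead realize $\Re(w)=(w^*)^{-1}$ by an explicit $\pi$-rotation $R$ of $S^3$, check diagrammatically that $R$ carries the closure of $w$ to the closure of $\Re(w)$ with all orientations reversed, use orientation-preservation of $R$ and linking numbers to pin down the axis orientation, and then invoke isotopy invariance of $R$ to conclude. All of these steps are sound (in particular the key sign check that the $y$-flip and $z$-flip each negate a crossing so their composite preserves each $\sigma_i^{\pm1}$ while reversing the reading order, and the placement of the axis as an $R$-invariant circle), so the proof goes through; what the paper's one-line version buys is brevity by working intrinsically with the oriented link rather than through an ambient symmetry, while your version has the advantage of making the orientation bookkeeping for the axis completely explicit rather than implicit.
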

\begin{proof}
Reversing the strand orientation necessitates reading the braid in reverse order.
\end{proof}

\begin{lemma}\label{lemma12}
For any braid $w$,  $\Re((ww^*)^k)$ and $(ww^*)^{-k}$ are conjugate. 
\end{lemma}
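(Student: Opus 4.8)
The plan is to reduce the statement to a short computation in the braid group $B_{2n+1}$, exploiting the interaction of three operations: word reversal $\Re$, crossing reversal $(\cdot)^*$, and group inversion. The starting observation is that these three are not independent but are linked by a single identity. Writing a braid word as $w=\sigma_{i_1}^{\epsilon_1}\cdots\sigma_{i_m}^{\epsilon_m}$, the map $\Re$ reverses the order of the letters while keeping their signs, and $(\cdot)^*$ reverses the signs while keeping the order; combining the two produces exactly the inverse word. Thus I would first record the identity $\Re(w^*)=w^{-1}$ and, equivalently, $\Re(w)=(w^*)^{-1}$. I would also note that, since $\Re$ reverses order, it is an anti-homomorphism, i.e. $\Re(uv)=\Re(v)\Re(u)$; this descends to $B_{2n+1}$ because the braid relations are preserved under reversal (equivalently, the formula $\Re(w)=(w^*)^{-1}$ exhibits $\Re$ as a composite of the well-defined maps $(\cdot)^*$ and inversion).

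With this identity in hand, the computation is mechanical. The element $(ww^*)^k$ is a word built from $2k$ blocks alternating $w$ and $w^*$. Applying the anti-homomorphism $\Re$ reverses the order of these blocks and applies $\Re$ to each, giving $\Re((ww^*)^k)=(\Re(w^*)\Re(w))^k$. Substituting $\Re(w^*)=w^{-1}$ and $\Re(w)=(w^*)^{-1}$ turns the inner factor into $w^{-1}(w^*)^{-1}=(w^*w)^{-1}$, so that $\Re((ww^*)^k)=(w^*w)^{-k}$.

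It then remains to compare $(w^*w)^{-k}$ with $(ww^*)^{-k}$. Since $w^*w=w^{-1}(ww^*)w$, the elements $w^*w$ and $ww^*$ are conjugate via $w$, hence so are their $(-k)$-th powers: $(w^*w)^{-k}=w^{-1}(ww^*)^{-k}w$. Combining this with the previous step yields $\Re((ww^*)^k)=w^{-1}(ww^*)^{-k}w$, which is conjugate to $(ww^*)^{-k}$, as claimed.

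The argument presents no genuine obstacle; its only delicate point is bookkeeping—keeping straight which of $\Re$ and $(\cdot)^*$ reverses order and which reverses sign, and correctly tracking the reversal of the $2k$ blocks. The one thing worth stating with care is the generating identity $\Re(w^*)=w^{-1}$, since every subsequent step is a formal consequence of it together with the anti-homomorphism property of $\Re$.
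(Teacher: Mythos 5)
Your proof is correct and takes essentially the same route as the paper's: both reduce to the observation that reversing the word $ww^*$ yields $(ww^*)^{-1}$ up to conjugation, and then raise to the $k$-th power using the anti-homomorphism property of $\Re$. The only difference is presentational --- the paper writes out the letters and notes that $\Re(ww^*)$ and $(ww^*)^{-1}$ are the same cyclic word, whereas you package this as the identity $\Re(w^*)=w^{-1}$ and exhibit the explicit conjugator $w$.
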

\begin{proof}
Let $w= g_1\ldots g_n$, where $g_i$ are braid generators and their inverses. Then $ww^*=g_1\ldots g_ng^{-1}_{1}\ldots g_n^{-1})$.
Now $(ww^*)^{-1}=g_n\ldots g_1 g_n^{-1}\ldots g_1^{-1}$
 while $\Re(ww^*)=g_n^{-1}\ldots g_1^{-1}g_n\ldots g_1$. So the braids $\Re(ww^*)$ and $(ww^*)^{-1}$ represent the same cyclic words and are therefore conjugate.
To complete the proof, note that $\Re((ww^*)^k)=(\Re(ww^*))^k$ is conjugate to $((ww^*)^{-1})^k=(ww^*)^{-k}$.
 \end{proof}

\begin{lemma}\label{lemma13}
Let $A$ be an $n\times n$ matrix with $\det(A)=1$. Suppose $A$ and $A^{-1}$ have the same characteristic polynomial, $\chi_A$. Then the coefficient of $\lambda^k$ equals the coefficient of $\lambda^{n-k}$. That means $\chi_A=\lambda^n+a_1\lambda^{n-1}+a_2\lambda^{n-2}+...+a_2\lambda^{2}+a_1\lambda+1 $ 
\end{lemma}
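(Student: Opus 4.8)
The plan is to convert the hypothesis that $A$ and $A^{-1}$ share a characteristic polynomial into a functional equation for $\chi_A$, and then read off the palindromic structure by comparing coefficients. The key observation is that, precisely because $\det A = 1$, the characteristic polynomial of $A^{-1}$ can be rewritten in terms of $\chi_A$ evaluated at the reciprocal of the variable.

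First I would write $\chi_{A^{-1}}(\lambda) = \det(\lambda I - A^{-1})$ and factor $\lambda I - A^{-1} = A^{-1}(\lambda A - I)$. Taking determinants and using $\det(A^{-1}) = 1/\det A = 1$ gives $\chi_{A^{-1}}(\lambda) = \det(\lambda A - I)$. For $\lambda \neq 0$ I would then factor $\lambda A - I = -\lambda(\tfrac{1}{\lambda}I - A)$, so that $\det(\lambda A - I) = (-\lambda)^n \det(\tfrac{1}{\lambda}I - A) = (-1)^n \lambda^n \chi_A(1/\lambda)$. Since both sides, once cleared of denominators, are polynomials agreeing on all nonzero $\lambda$, the identity $\chi_{A^{-1}}(\lambda) = (-1)^n \lambda^n \chi_A(1/\lambda)$ holds identically.

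Next, invoking the hypothesis $\chi_{A^{-1}} = \chi_A$, I obtain the functional equation $\chi_A(\lambda) = (-1)^n \lambda^n \chi_A(1/\lambda)$. Writing $\chi_A(\lambda) = \sum_{k=0}^n c_k \lambda^k$ with $c_n = 1$, the substitution gives $\lambda^n \chi_A(1/\lambda) = \sum_k c_k \lambda^{n-k} = \sum_j c_{n-j}\lambda^j$, so comparing coefficients yields $c_j = (-1)^n c_{n-j}$ for every $j$. When $n$ is even (the relevant case here, since the reduced Burau representation of a $(2n+1)$-braid is $2n \times 2n$) this reads $c_j = c_{n-j}$, i.e. the polynomial is palindromic, and the constant term is $c_0 = c_n = 1$, giving exactly the stated form.

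I do not expect a serious obstacle: the argument is a direct manipulation of determinants followed by a coefficient comparison. The only point requiring care is the sign $(-1)^n$; for odd $n$ the same computation produces an \emph{anti}-palindromic polynomial with constant term $-1$, so the clean palindromic conclusion with constant term $1$ genuinely relies on the matrices arising in our application having even size. I would flag this even-dimensionality explicitly so that the normalization in the statement is justified.
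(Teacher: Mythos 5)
Your proof is correct and follows essentially the same route as the paper's, which simply asserts the reciprocal-variable identity ``$\chi_A(\lambda)=\chi_{A^{-1}}(\lambda^{-1})$'' and declares the result to follow; you supply the actual determinant manipulation behind that identity. Your explicit tracking of the $(-1)^n$ factor is in fact a point the paper glosses over: as you note, for odd $n$ the constant term is $-1$ and the polynomial is anti-palindromic (e.g.\ $A=(1)$ gives $\chi_A=\lambda-1$), so the lemma's stated conclusion genuinely requires the even dimensionality that holds for the reduced Burau matrices in the application.
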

\begin{proof}
When $A$ is nonsingular, it is easy to see that  $\chi_A(\lambda)=\chi_{A^{-1}}(\lambda^{-1})$ from which the result follows.
%
\end{proof}

Now we give the definition of Lucas polynomials, which we will use in the proof of Theorem \ref{theorem15}. They are useful for converting between Alexander and Conway polynomials. (Recall that the Conway polynomial $C(z)$ and Alexander polynomial $A(t)$ are related by the change of variables $z=\sqrt{t}-\frac{1}{\sqrt{t}}.$)
\begin{definition}
The Lucas polynomials are defined recursively by  $L_0=2,L_1=z,L_n=zL_{n-1}+L_{n-2}$ for all $n\geq 2$.
\end{definition}
\begin{lemma}\label{LucasLemma} \cite{Agle}
Let $z=\sqrt{t}-\frac{1}{\sqrt{t}}$. Then $t^{n/2}+(-1)^nt^{-n/2}=L_{n}(z)$. 
\end{lemma}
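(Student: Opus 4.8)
The plan is to prove the identity by induction on $n$, exploiting the fact that the left-hand side satisfies the same two-term recurrence as the Lucas polynomials. To set up the induction I would substitute $u=\sqrt{t}$, so that $u^{-1}=1/\sqrt{t}$ and $z=u-u^{-1}$, and define $P_n := u^{n}+(-1)^{n}u^{-n}$; the goal is then to show $P_n=L_n(z)$ for all $n\ge 0$. The two base cases are immediate: $P_0=u^{0}+u^{0}=2=L_0$ and $P_1=u-u^{-1}=z=L_1$, matching the initial values in the definition of the Lucas polynomials.

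The inductive step is the substance of the argument: I would verify that $P_n=zP_{n-1}+P_{n-2}$, which is exactly the Lucas recurrence. Expanding $zP_{n-1}=(u-u^{-1})\bigl(u^{n-1}+(-1)^{n-1}u^{-(n-1)}\bigr)$ yields four monomials, namely the two ``main'' terms $u^{n}$ and $(-1)^{n}u^{-n}$, together with the two ``error'' terms $-u^{n-2}$ and $(-1)^{n-1}u^{-(n-2)}$. Adding $P_{n-2}=u^{n-2}+(-1)^{n}u^{-(n-2)}$ then cancels both error terms: the $u^{n-2}$ contributions cancel outright, while the $u^{-(n-2)}$ contributions cancel because $(-1)^{n-1}+(-1)^{n}=0$. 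What survives is precisely $u^{n}+(-1)^{n}u^{-n}=P_n$, completing the step, and by induction $P_n=L_n(z)$ for every $n$.

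There is no serious obstacle here; the computation is elementary. The only point requiring care is the sign bookkeeping in the inductive step, and in particular noticing that the alternating factor $(-1)^{n}$ built into $P_n$ is exactly what forces the unwanted $u^{-(n-2)}$ cross-terms to vanish via $(-1)^{n-1}+(-1)^{n}=0$. This is precisely why the statement is normalized with $z=\sqrt{t}-1/\sqrt{t}$ (the Conway substitution) and with the sign $(-1)^{n}$ on the left: a different substitution or sign convention would leave a surviving cross-term and would not reproduce the Lucas recurrence.
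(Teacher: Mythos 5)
Your proof is correct and follows essentially the same route as the paper: induction on $n$ with base cases $n=0,1$, then verifying that $t^{n/2}+(-1)^n t^{-n/2}$ satisfies the Lucas recurrence $L_n = zL_{n-1}+L_{n-2}$. The only difference is that you write out the cancellation of the cross-terms explicitly, whereas the paper dismisses that computation as ``not difficult.''
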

\begin{proof}
The base cases of $k=0,1$ are obvious. So it suffices to show that  $t^{k/2}+t^{-k/2}$ satisfies the defining relation of the Lucas polynomials:
\begin{multline*}
 t^{n/2}+(-1)^nt^{-n/2}=\\ (t^{1/2}-t^{-1/2})(t^{(n-1)/2}+(-1)^{n-1}t^{-(n-1)/2})+(t^{(n-2)/2}+(-1)^{n-2}t^{-(n-2)/2}),
 \end{multline*}
which is not difficult.
\end{proof}

\begin{lemma}\label{LucasFac}
The Lucas polynomials satisfy the following identity in $\mathbb Z_4[z]$.
$$(1+L_1+\cdots+L_n)(1-L_1+\cdots+(-1)^nL_n))=(-1)^n(1+L_2+L_4+\cdots+L_{2n}).$$
\end{lemma}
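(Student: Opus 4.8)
The plan is to reduce the identity to one about the single-index telescoped sums, where the Lucas recurrence supplies clean closed forms. Write $P_n = 1 + L_1 + \cdots + L_n$, $Q_n = 1 - L_1 + \cdots + (-1)^nL_n$ and $R_n = 1 + L_2 + L_4 + \cdots + L_{2n}$, so that the claim is $P_nQ_n \equiv (-1)^n R_n \pmod 4$. I would first observe that multiplication by $z^2$ is injective on $\mathbb{Z}_4[z]$, since $z$ is not a zero divisor there; hence it suffices to verify the identity after multiplying both sides by $z^2$. This is the key simplification, because while $P_n,Q_n,R_n$ have no convenient closed forms, their products with powers of $z$ do.

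Next I would exploit the relation $zL_k = L_{k+1} - L_{k-1}$, which is immediate from the defining recurrence $L_{k+1} = zL_k + L_{k-1}$. Telescoping the three sums then yields
\[ zP_n = L_{n+1} + L_n - 2, \qquad zQ_n = (-1)^n(L_{n+1}-L_n) + 2, \qquad z^2R_n = L_{2n+2} - L_{2n}. \]
For $Q_n$ the alternating signs make the telescoping slightly more delicate, but the interior terms still cancel in pairs. Multiplying the first two and abbreviating $a = L_{n+1}$, $b = L_n$, $\epsilon = (-1)^n$, I obtain
\[ z^2 P_n Q_n = \epsilon(a^2 - b^2) + 2a + 2b - 2\epsilon(a-b) - 4. \]

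The remaining ingredient is the squaring identity $L_m^2 = L_{2m} + 2(-1)^m$, which follows at once from Lemma~\ref{LucasLemma} by squaring $t^{m/2}+(-1)^mt^{-m/2}$. Substituting $a^2 = L_{2n+2} - 2\epsilon$ and $b^2 = L_{2n} + 2\epsilon$ gives $\epsilon(a^2-b^2) = \epsilon(L_{2n+2}-L_{2n}) - 4 \equiv \epsilon\, z^2 R_n \pmod 4$. It then remains to show that the leftover terms $2a + 2b - 2\epsilon(a-b) - 4$ vanish modulo $4$, and this is where the only real subtlety lies. Reducing modulo $2$ we have $\epsilon \equiv 1$, so $-\epsilon(a-b) + a + b \equiv 2b \equiv 0 \pmod 2$, which makes $2a + 2b - 2\epsilon(a-b)$ a multiple of $4$; together with $-4 \equiv 0$ this establishes $z^2 P_n Q_n \equiv (-1)^n z^2 R_n \pmod 4$. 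Cancelling $z^2$ by the injectivity noted at the outset yields the stated identity.

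I expect the main obstacle to be the bookkeeping in the alternating telescoping that produces the closed form for $zQ_n$, together with careful tracking of exactly which terms are annihilated by the modulus. Once the three closed forms and the squaring identity are in hand, the mod-$4$ reduction itself is short, hinging only on the parity observation $(-1)^n \equiv 1 \pmod 2$ and on $\epsilon^2 = 1$.
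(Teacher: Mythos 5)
Your proof is correct, and all three telescoped closed forms check out: $zP_n=L_{n+1}+L_n-2$, $zQ_n=(-1)^n(L_{n+1}-L_n)+2$, and $z^2R_n=L_{2n+2}-L_{2n}$ (the last via the second difference $z^2L_{2k}=L_{2k+2}-2L_{2k}+L_{2k-2}$), as does the reduction of the leftover $2\bigl[a+b-\epsilon(a-b)\bigr]$ to a multiple of $4$. Your route is genuinely different from the paper's: the paper argues by induction on $n$, expanding $(P_{n-1}+L_n)(Q_{n-1}\pm L_n)$ and invoking the product identity $L_nL_m=L_{n+m}+(-1)^mL_{n-m}$ (cited from Wu--Zhang) to collapse the cross term $2L_n(1+L_2+\cdots+L_{n-2})$ modulo $4$; it also treats only the even-$n$ inductive step in detail and leaves the odd case to the reader. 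Your argument is direct and closed-form: multiplying by $z^2$ (legitimate, since $z$ is monic and hence a non-zero-divisor in $\Z_4[z]$) trades the three sums for differences of single Lucas polynomials, after which only the squaring identity $L_m^2=L_{2m}+2(-1)^m$ --- an immediate consequence of Lemma~\ref{LucasLemma} --- is needed. What the paper's induction buys is that it never leaves the polynomial ring presentation of the $L_k$; what your approach buys is a uniform treatment of both parities of $n$, no appeal to the external product formula, and an exact identity over $\Z[z]$ (the error term is literally $-8+2\bigl[a+b-\epsilon(a-b)\bigr]$, visibly divisible by $4$) rather than one assembled step by step modulo $4$.
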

\begin{proof}
We use the following identity \cite{Wu}, $L_nL_m=L_{n+m}+(-1)^mL_{n-m}$. We use induction on $n$. The base case is easy, so we consider the induction step. Assume first that $n$ is even. Then 
\begin{multline*}
((1+L_1+\cdots+L_{n-1})+L_n)((1-L_1+\cdots-L_{n-1})+L_n)=\\
 -(1+L_2+L_4+\cdots +L_{2n-2})+2L_n(1+L_2+L_4+\cdots+L_{n-2})+L_n^2\\
\end{multline*}
We have (in $\Z_4[z]$) 
\begin{multline*}
2L_n(1+L_2+L_4+\cdots+L_{n-2})=2(L_n+L_{n\pm 2}+\cdots+L_{n\pm (n-2)}=\\
2(L_2+L_4+\cdots+L_{2n-2})
\end{multline*}
So continuing the calculation, we have
\begin{multline*}
-(1+L_2+L_4+\cdots +L_{2n-2})+2(L_2+L_4+\cdots+L_{2n-2})+L_{2n}+2=\\
1+L_2+L_4+\cdots+L_{2n}
\end{multline*}
completing the inductive step when $n$ is even.
The inductive step when $n$ is odd is similar and left to the reader.
\end{proof}
\begin{theorem}\label{theorem15}
If $K$ is a positive amphicheiral knot coming from the closure of the braid $(ww^*)^{2^m}$, where $w$ is a braid with an odd number of strands, then the Conway polynomial of $K$ satisfies Conjecture \ref{conj11}. i.e\\
\[
C_K(z)\equiv f(z)f(-z)\mod 4 \text{ for some } f\in \mathbb{Z}_4[z]
\] 
\end{theorem}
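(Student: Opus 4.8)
The plan is to read the Conway polynomial off the reduced Burau representation (Theorem~\ref{theorem14}) and to exploit that the braid is a $2$-power of $ww^*$. Write $w$ on $2\ell+1$ strands and set $B:=\beta(ww^*)$, a $2\ell\times2\ell$ matrix over $\Z[t^{\pm1}]$. Every closure of a braid in this group carries the same denominator $1+t+\cdots+t^{2\ell}$, which, after dividing by the unit $t^{\ell}$ and passing to the Conway variable $z=\sqrt t-1/\sqrt t$, becomes $\hat D(z):=1+L_2+L_4+\cdots+L_{2\ell}$ by Lemma~\ref{LucasLemma}. Thus $C_K(z)\,\hat D(z)=\det(I-B^{2^m})$, and likewise $C_{ww^*}(z)\,\hat D(z)=\det(I-B)$ for the strongly positive amphicheiral knot $\Cl(ww^*)$, for which Theorem~\ref{theorem12} gives $C_{ww^*}(z)=f(z^2)^2$.

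First I would establish that the eigenvalues of $B$ occur in reciprocal pairs $\mu_1^{\pm1},\dots,\mu_\ell^{\pm1}$. Since the exponent sum of $ww^*$ vanishes and $\det\beta(\sigma_i)=-t$, one has $\det B=1$. By Lemma~\ref{lemma12} the braid $(ww^*)^{-1}$ is conjugate to $\Re(ww^*)$, so $B^{-1}$ and $\beta(\Re(ww^*))$ share a characteristic polynomial; combined with Lemma~\ref{lemma11} (reversal amounts to reversing all strand orientations, which preserves the Alexander data carried by $\det(\lambda I-\beta(\,\cdot\,))$) this yields $\chi_B=\chi_{B^{-1}}$, and Lemma~\ref{lemma13} then forces $\chi_B$ to be palindromic, i.e.\ the reciprocal-pair structure.

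Next I would use the factorization $1-x^{2^m}=(1-x)\prod_{i=0}^{m-1}(1+x^{2^i})$ to write
\[
\det(I-B^{2^m})=\det(I-B)\prod_{i=0}^{m-1}\det(I+B^{2^i}).
\]
Dividing by $\hat D$ and invoking $C_{ww^*}(z)=f(z^2)^2$ then gives
\[
C_K(z)=f(z^2)^2\prod_{i=0}^{m-1}\det(I+B^{2^i}).
\]
For $i\ge1$ each factor is an honest perfect square: if $C$ has reciprocal eigenvalue pairs $\rho^{\pm1}$ then $\det(I+C^2)=\prod(1+\rho^2)(1+\rho^{-2})=\bigl(\prod(\rho+\rho^{-1})\bigr)^2$, and I apply this to $C=B^{2^{i-1}}$. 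Perfect squares of even polynomials split trivially, and multiplying a splitting by a perfect square preserves it, so only the single factor $\det(I+B)$ remains.

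The heart of the argument, and the step I expect to be the main obstacle, is to show $\det(I+B)\equiv f(z^2)^2\,\Phi(z)\Phi(-z)\pmod4$ with $\Phi(z)=1+L_1+\cdots+L_\ell$. Writing $\det(I\pm B)=\prod_j(2\pm s_j)$ with $s_j=\mu_j+\mu_j^{-1}$ and expanding in the elementary symmetric functions $e_k=e_k(s_1,\dots,s_\ell)\in\Z[t^{\pm1}]$, every term carrying a factor $2^{\ge2}$ dies modulo $4$; the two surviving terms differ only by $4e_{\ell-1}\equiv0$, which gives the key congruence $\det(I+B)\equiv(-1)^\ell\det(I-B)\pmod4$. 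This is exactly where working modulo $4$ rather than over $\Z$ is essential, and it is the mechanism behind Conjecture~\ref{conj12} failing while Conjecture~\ref{conj11} survives. Substituting $\det(I-B)=f(z^2)^2\hat D(z)$ and then applying Lemma~\ref{LucasFac} in the form $\hat D(z)\equiv(-1)^\ell\Phi(z)\Phi(-z)\pmod4$ converts the denominator's Lucas factorization into the splitting of $\det(I+B)$. Assembling everything yields $C_K(z)\equiv f(z^2)^4\,\Phi(z)\Phi(-z)\cdot(\text{even square})\pmod4$, which is manifestly $\phi(z)\phi(-z)$ once the even square factors are absorbed into $\phi$. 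The delicate points to watch are the unit bookkeeping between $\det(I-B^{2^m})$ and the symmetric Conway normalization (resolved by the common denominator $\hat D$), and the availability of the reciprocal-pair structure of Step~1, on which every subsequent identity depends.
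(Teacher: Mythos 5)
Your proposal is essentially the paper's proof: it uses the same Burau/Alexander setup via Theorem~\ref{theorem14}, the same derivation of the reciprocal-eigenvalue (palindromic) structure from Lemmas~\ref{lemma11}--\ref{lemma13} and Morton's theorem, the same factorization of $\det(I-B^{2^m})$ (the paper runs it as an induction on $m$, you unroll it into the product $\prod_i\det(I+B^{2^i})$), the same key congruence $\det(I+B)\equiv(-1)^{\ell}\det(I-B)\pmod 4$ (obtained in the paper by evaluating the palindromic characteristic polynomial at $\pm1$, by you via elementary symmetric functions of the $s_j=\mu_j+\mu_j^{-1}$ --- equivalent computations, and your sign is the correct one, the paper's displayed ``$+(-1)^n$'' should be ``$-(-1)^n$'', which washes out), and the same appeal to Lemma~\ref{LucasFac} to split the denominator. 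The one refinement is your observation that the factors with $i\ge1$ are exact perfect squares $\bigl(\prod_j(\rho_j+\rho_j^{-1})\bigr)^2$, so the mod-$4$ congruence is invoked only once rather than at every inductive stage; both versions leave the same routine parity/unit bookkeeping (that these squares are even in $z$) implicit.
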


\begin{proof}
We proceed by induction. We will use the fact that an $n$-braid closes to a knot iff any power of it closes to a knot. (The associated permutation must be an $n$-cycle.) Thus the induction will go smoothly.

For $m=1$, the resulting knot is strongly positive amphicheiral, so the result holds. Indeed $C_K(z)=f(z^2)^2$ for a polynomial in $\mathbb Z[z]$.

Consider the characteristic polynomial of $(ww^*)^{2^m}$ : $\det(\lambda I-\beta((ww*)^m))$ where $w$ is a $2n+1$-braid.
First we claim that the characteristic polynomials of $(ww^*)^k$ and $(ww^*)^{-k}$ are the same.
 This follows because the characteristic polynomial is the $2$-variable Alexander polynomial of the link which is the braid closure union the braid axis \cite{Morton}. In particular, Lemma \ref{lemma12} implies that $\Re(ww^*)$ and $(ww^*)^{-1}$ are conjugate, from which it follows that the link closures including the braid axes are the same up to a global orientation reversal (\ref{lemma11}). But the two variable Alexander polynomial is invariant under global orientation reversal. 

Hence, the characteristic polynomial $\chi_{\beta(ww^*)^k}(\lambda)$ is symmetric. When $\lambda=1$,  $\chi_{\beta(ww^*)^k}(1)=2(1+a_1(t)+\cdots+a_n(t))$. Similarly, $\chi_{\beta((ww^*)^k)}(-1)=2(1-a_1(t)+a_2(t)+\cdots+(-1)^na_n(t))$. Thus 
\begin{multline*}
\chi_{\beta(ww^*)^k}(1)+(-1)^n\chi_{\beta(ww^*)^k}(-1)=\\\det(I-\beta{(ww^*)^k})+(-1)^n\det(I+\beta{(ww^*)^k})\equiv 0\mod 4.
\end{multline*}
 In this proof we have used fact that $a_i(t)$, when converted using the substitution $\sqrt{t}-\frac{1}{\sqrt{t}}=z$ yields integer polynomials.
 
 Now observe $(1+t+\cdot\cdot\cdot t^{2n})$ differs only by a power of $t$ from $((t^n+\frac{1}{t^n})+\cdot\cdot\cdot+(t+1/t)+1)=L_{2n}(z)+L_{2n-2}(z)+\cdots+1$ by Lemma~\ref{LucasLemma}. Now, by Lemma~\ref{LucasFac}, this is equivalent to $(-1)^n(1+L_1(z)+\cdot\cdot\cdot+L_n(z))(1-L_1(z)+\cdot\cdot\cdot+(-1)^nL_n(z))) \mod 4$.
 
The induction hypothesis is that the Alexander polynomial of the knot closure of $(ww^*)^{2^{m-1}}$ is \[
 \frac{\det(I-\beta((ww^*)^{2^{m-1}}))}{1+t+\cdot\cdot\cdot+t^{2n}}\equiv f(z)f(-z) \text{ for some} f(z)\in \Z_4[z]
 \]. 
 
From above we have, $\det(I-\beta{(ww^*)^{2^{m-1}}})+(-1)^n\det(I+\beta{(ww^*)^{2^{m-1}}})\equiv 0\mod 4$. Thus we conclude 
\[
\det(I+\beta(ww^*))\equiv f^2(z)(1+L_1(z)+\cdot\cdot\cdot+L_n(z))(1-L_1(z)+\cdot\cdot\cdot+(-1)^nL_n(z))) \mod 4
\]
 Now consider the closure of braid $(ww^*)^{2^m}$. We can see that  
 
 \begin{align*}
 \frac{\det(I-\beta(ww^*)^{2^m})}{1+t+\cdot\cdot\cdot+t^{2n}}&=\frac{\det(I-\beta(ww^*)^{2^{m-1}})\det(I+\beta(ww^*)^{2^{m-1}})}{1+t+\cdot\cdot\cdot+t^{2n}}\\
&\equiv f^2(z)f^2(-z)(1+L_1(z)+\cdot\cdot\cdot+L_n(z))(1-L_1(z)+\cdot\cdot\cdot+(-1)^nL_n(z))\\
 &\equiv g(z)g(-z) \mod 4 \\
  \text{ where } g(z) &=f^2(z)(1+L_1(z)+\cdot\cdot\cdot+L_n(z))
 \end{align*}
 Here we use the easily proven fact that $L_{2n+1}(z)$ contains only odd powers of $z$. 
 
 This concludes the proof of the inductive step.
 
\end{proof}

\begin{corollary}
If $K$ is a positive amphicheiral knot coming from the closure of the braid $(ww^*)^{k}$, where $w$ is a braid with an odd number of strands, then the Conway polynomial of $K$ satisfies Conjecture \ref{conj11}. i.e\\
\[
C_K(z)\equiv f(z)f(-z)\mod 4 \text{ for some } f\in \mathbb{Z}_4[z]
\] 
\end{corollary}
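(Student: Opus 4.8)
The plan is to reduce the general exponent $k$ to the power-of-two case already settled in Theorem~\ref{theorem15}. Write $k=2^m q$ with $q$ odd. The heart of the argument is the observation that the odd factor $q$ can be absorbed into a new braid, so that $(ww^*)^k$ reappears as $(vv^*)^{2^m}$ for a suitable braid $v$ on the same (odd) number of strands. Once this is accomplished, Theorem~\ref{theorem15} applies verbatim and yields the conclusion.

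To produce $v$, I would set $v=(ww^*)^{(q-1)/2}w$, which is legitimate precisely because $q$ is odd, so $(q-1)/2\in\Z$. Since the mirror operation $w\mapsto w^*$ sends each generator $\sigma_i\mapsto\sigma_i^{-1}$ while preserving the order of the braid word, it is a homomorphism, so $v^*=(w^*w)^{(q-1)/2}w^*$. Using the elementary braid identity $w(w^*w)^j=(ww^*)^jw$ (immediate by a one-line induction), one computes
\begin{align*}
vv^* &= (ww^*)^{(q-1)/2}\,w\,(w^*w)^{(q-1)/2}\,w^*\\
&= (ww^*)^{(q-1)/2}(ww^*)^{(q-1)/2}\,w\,w^*\\
&= (ww^*)^{q}.
\end{align*}
Consequently $(ww^*)^k=\big((ww^*)^q\big)^{2^m}=(vv^*)^{2^m}$ as braid words, so the two braids have literally the same standard closure, namely $K$.

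It then remains only to verify that the hypotheses of Theorem~\ref{theorem15} are met for $v$ in place of $w$: the braid $v$ lies in the same braid group as $w$, hence has an odd number of strands, and its closure $\Cl((vv^*)^{2^m})=K$ is by assumption a positive amphicheiral knot. Theorem~\ref{theorem15} then gives $C_K(z)\equiv f(z)f(-z)\mod 4$ directly. (The borderline case $m=0$, i.e. $k$ odd, corresponds to $K$ being the closure of $vv^*$, which is strongly positive amphicheiral; there the splitting already follows from Theorem~\ref{theorem12}.)

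I expect the only real content to be the identity $(ww^*)^q=vv^*$; everything else is bookkeeping. The mild points to be careful about are that $v$ genuinely has an odd number of strands — which it does, being a word in the same generators as $w$ — and that replacing $w$ by $v$ does not alter the knot. The latter is automatic here, since $(vv^*)^{2^m}$ and $(ww^*)^k$ are equal as braid words rather than merely conjugate, so no appeal to invariance of the closure under conjugation is even needed.
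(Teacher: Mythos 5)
Your proposal is correct and is essentially the paper's own argument: the paper likewise writes $k=2^m(2q+1)$ and absorbs the odd part via $\tilde w=(ww^*)^qw$, so that $(ww^*)^k=(\tilde w\tilde w^*)^{2^m}$ and Theorem~\ref{theorem15} applies. Your version merely spells out the verification $vv^*=(ww^*)^q$ and the $m=0$ edge case a bit more explicitly, which is fine.
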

\begin{proof}
Suppose $k=2q+1$ is odd. Then $(ww^*)^{k}=\tilde{w}\tilde{w}^*$ where $\tilde{w}=(ww^*)^qw$. In general we have $k=2^m(2q+1)$, and 
$$(ww^*)^k=(w^{2q+1})^{2^m}=(\tilde{w}\tilde{w}^*)^{2^m},$$
which is of the form where Theorem~\ref{theorem15} applies.
\end{proof}

%

We close this section with an example where we calculate the Conway Polynomials explicitly for an infinite class of $3$ strand braids of the form $ww^*ww^*$. In view of Theorem~\ref{theorem12}, these all have Conway polynomials which split over $\mathbb Z_4[z]$, but in general they will not split over $\mathbb Z[z]$. These examples are closely related to Fibonacci polynomials, which we now introduce.

\begin{definition}\cite{hoggatt}
The two variable generalized Fibonacci polynomial is defined recursively as follows:\\
$U_n(x,y)=xU_{n-1}(x,y)+yU_{n-2}(x,y)$, $U_1(x,y)=1,U_0(x,y)=0$ \\
The one variable Fibonacci polynomial is defined  by,\\ 
 $F_1(z)=1,F_2(z)=z,F_n(z)=zF_{n-1}(z)+F_{n-2}(z)$. 

\end{definition}

\begin{corollary}\label{corol11}
Let $\Omega_n(t)=\Sigma_{i=0}^{n-1}(-t)^i$. When $n$ is odd, $\frac{\Omega_n^2}{t^{n-1}}=F_n^2(z)$, where $F_n$ denotes the $n^{th}$ Fibonacci polynomial and $z=\sqrt{t}-\frac{1}{\sqrt{t}}$.
\end{corollary}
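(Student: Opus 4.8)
The plan is to obtain a closed ``Binet-type'' formula for $F_n$ in the variable $t$, exactly parallel to Lemma~\ref{LucasLemma} for the Lucas polynomials, and then to match it against the geometric-series expression for $\Omega_n$.

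First I would prove the auxiliary identity
\[
F_n(z)=\frac{t^{n/2}-(-1)^n t^{-n/2}}{t^{1/2}+t^{-1/2}},\qquad z=\sqrt t-\tfrac{1}{\sqrt t},
\]
by induction on $n$, in the same spirit as the proof of Lemma~\ref{LucasLemma}. The base cases $n=1,2$ are immediate ($F_1=1$ and $F_2=z$), and for the inductive step one checks that the right-hand side satisfies $F_n=zF_{n-1}+F_{n-2}$; writing $z=t^{1/2}-t^{-1/2}$ and expanding, the cross terms cancel because $-(-1)^{n-1}=(-1)^n$. Equivalently, this is the Binet formula with $\alpha=\sqrt t$ and $\beta=-1/\sqrt t$, the two roots of $x^2-zx-1$, since then $\alpha+\beta=z$, $\alpha\beta=-1$, and $\alpha-\beta=\sqrt t+1/\sqrt t$.

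Next I specialize to odd $n$, where $(-1)^n=-1$, so the formula collapses to $F_n(z)=\dfrac{t^{n/2}+t^{-n/2}}{t^{1/2}+t^{-1/2}}$. In parallel, I evaluate $\Omega_n$ as a geometric series: $\Omega_n(t)=\sum_{i=0}^{n-1}(-t)^i=\dfrac{1-(-t)^n}{1+t}=\dfrac{1+t^n}{1+t}$, again using that $n$ is odd. Clearing half-integer powers by multiplying the numerator and denominator of $F_n$ by $t^{n/2}$ gives $F_n(z)=\dfrac{t^n+1}{t^{(n-1)/2}(t+1)}=\dfrac{\Omega_n(t)}{t^{(n-1)/2}}$. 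Squaring both sides yields $F_n^2(z)=\Omega_n^2/t^{n-1}$, which is the claim.

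The computation is entirely routine, so I do not expect a serious obstacle; the only point demanding care is the bookkeeping with half-integer powers of $t$, together with the observation that for odd $n$ the exponent $(n-1)/2$ is an integer, so that $\Omega_n/t^{(n-1)/2}$ is a genuine symmetric Laurent polynomial and hence really is a polynomial in $z$. One could alternatively bypass the auxiliary identity and prove $\Omega_n/t^{(n-1)/2}=F_n(z)$ directly by induction on odd $n$, but the Binet-type formula makes the algebra cleaner and has the side benefit of mirroring the structure already used for the Lucas polynomials.
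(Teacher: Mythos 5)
Your argument is correct, and every step checks out: with $z=\sqrt t-1/\sqrt t$ one has $z^2+4=(\sqrt t+1/\sqrt t)^2$, so $\sqrt t$ and $-1/\sqrt t$ are indeed the roots of $x^2-zx-1$, the Binet-type formula $F_n(z)=\bigl(t^{n/2}-(-1)^nt^{-n/2}\bigr)/\bigl(t^{1/2}+t^{-1/2}\bigr)$ follows, and for odd $n$ it matches $\Omega_n(t)/t^{(n-1)/2}=(1+t^n)/\bigl(t^{(n-1)/2}(1+t)\bigr)$ exactly. However, your route is genuinely different from the paper's. The paper works through the two-variable generalized Fibonacci polynomials: it first shows by induction that $U_n(t-1,t)=(-1)^{n-1}\Omega_n(t)$, and then invokes the Hoggatt--Long identity $U_n(x,y)=y^{(n-1)/2}F_n(x/\sqrt y)$ from \cite{hoggatt}, specializing $x=t-1$, $y=t$ so that $x/\sqrt y=z$. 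What the paper's approach buys is that the substitution $z=\sqrt t-1/\sqrt t$ is packaged inside a quoted identity, at the cost of an auxiliary induction and an external reference. What your approach buys is self-containedness and economy: the geometric-series evaluation of $\Omega_n$ and the closed form for $F_n$ do all the work, the latter is the exact Fibonacci analogue of Lemma~\ref{LucasLemma} for the Lucas polynomials, and it is the same Binet identity the paper itself uses later in the proof of Proposition~\ref{factor}. Your one point of care --- that for odd $n$ the exponent $(n-1)/2$ is an integer so no half-integer powers survive --- is exactly the right thing to flag.
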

\begin{proof}
We need to prove $t^{-(n-1)/2}\Omega_n(t)=F_n(t)$, when $n$ is odd. First we claim that $U_n(t-1,t)=(-1)^{n-1}\Sigma_{i=0}^{n-1}(-t)^i$. This can be proved using induction. In particular when $n$ is odd, $U_n(t-1,t)=\Omega_n(t)$. Next we use following identity \cite{hoggatt}. $U_n(x,y)=y^{(n-1)/2}F_n(\frac{x}{\sqrt{y}})$. So we have $U_n(t-1,t)=t^{(n-1)/2}F_n(\sqrt{t}-\frac{1}{\sqrt{t}})=t^{(n-1)/2}F_n(z)$. Letting $n$ be odd, we get our result.\\ 
\end{proof}

\begin{proposition}\label{lemma14}
Let $K$ be the closure of the 3-braid $ww^*ww^*$, where $w=\sigma_1^n\sigma_2^m, w^*=\sigma_1^{-n}\sigma_2^{-m}$ with $ n,m $ are odd integers. Then  
\begin{align*}
C_K(z) &=F_n^2(z)F_m^2(z)(4-F_n^2(z)F_m^2(z)(z^2+3)) 
\end{align*} 
 where $F_n(z)$ is n-th Fibonacci polynomial. \end{proposition}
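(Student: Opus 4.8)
The plan is to compute the reduced Burau representation of $w$ and $w^*$ explicitly as $2\times 2$ matrices over $\Z[t^{\pm 1}]$ and then feed the result into Theorem~\ref{theorem14}. Since $w=\sigma_1^n\sigma_2^m$ is a product of powers of the two triangular generators, the standard formula for powers of a triangular matrix gives, for odd $n,m$,
\[
\beta(\sigma_1^{\,n})=\begin{pmatrix}-t^{n}&\Omega_n\\ 0&1\end{pmatrix},\qquad
\beta(\sigma_2^{\,m})=\begin{pmatrix}1&0\\ t\,\Omega_m&-t^{m}\end{pmatrix},
\]
where $\Omega_n=\Omega_n(t)=\sum_{i=0}^{n-1}(-t)^i=\frac{1+t^{n}}{1+t}$ as in Corollary~\ref{corol11}; the mirror factors $\beta(\sigma_1^{-n}),\beta(\sigma_2^{-m})$ are obtained by inverting these. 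Multiplying out gives explicit matrices for $\beta(w)$ and $\beta(w^*)$. Since each Burau generator has determinant $-t$ and $n+m$ is even, $\det\beta(ww^*)=1$ (consistent with Lemma~\ref{lemma13}).

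The key simplification is that $M:=\beta(ww^*)$ is a $2\times2$ matrix of determinant $1$. Writing its eigenvalues as $\lambda,\lambda^{-1}$ with $\lambda+\lambda^{-1}=\tau:=\mathrm{tr}\,M$, the matrix $M^2=\beta((ww^*)^2)$ has eigenvalues $\lambda^{2},\lambda^{-2}$, so
\[
\det\!\big(I-M^2\big)=(1-\lambda^2)(1-\lambda^{-2})=2-(\lambda^2+\lambda^{-2})=4-\tau^2 .
\]
Thus the whole computation collapses to the single trace $\tau=\mathrm{tr}\big(\beta(w)\beta(w^*)\big)$, and I never have to square a matrix by hand.

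Carrying out the trace and collecting terms, $\tau$ takes the symmetric form
\[
\tau=2+t^{\,2-n-m}\Omega_n^2\Omega_m^2-t\,\Omega_n\Omega_m(1+t^{-n})(1+t^{-m}).
\]
Now I convert to $z$. By Corollary~\ref{corol11}, $\Omega_n=t^{(n-1)/2}F_n(z)$ for odd $n$, whence $t^{\,2-n-m}\Omega_n^2\Omega_m^2=F_n^2F_m^2$. For the last term, $t\,\Omega_n\Omega_m=t^{(n+m)/2}F_nF_m$, and grouping the monomials of $t^{(n+m)/2}(1+t^{-n})(1+t^{-m})$ into symmetric pairs gives $(t^{n/2}+t^{-n/2})(t^{m/2}+t^{-m/2})$; since $t^{k/2}+t^{-k/2}=F_k(z)(t^{1/2}+t^{-1/2})$ for odd $k$ (again from Corollary~\ref{corol11}) and $(t^{1/2}+t^{-1/2})^2=z^2+4$, this equals $F_n^2F_m^2(z^2+4)$. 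Hence, writing $Q=F_n^2F_m^2$,
\[
\tau=2+Q-Q(z^2+4)=2-Q(z^2+3),\qquad 4-\tau^2=(2-\tau)(2+\tau)=Q(z^2+3)\big(4-Q(z^2+3)\big).
\]
Finally $1+t+t^2=t\,(t^{-1}+1+t)=t(z^2+3)$ by Lemma~\ref{LucasLemma}, so Theorem~\ref{theorem14} yields
\[
C_K(z)=\frac{\det(I-\beta((ww^*)^2))}{1+t+t^2}=\frac{Q(z^2+3)\big(4-Q(z^2+3)\big)}{t(z^2+3)}=Q\big(4-Q(z^2+3)\big),
\]
which is the claimed formula.

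The only genuine labor is the explicit trace computation together with the bookkeeping of half-integer powers of $t$ when passing to $z$; the determinant-one identity of the second paragraph is what keeps this under control, since it reduces the squared braid to a single trace and the Fibonacci/Lucas dictionary then does the rest. I expect the final normalization to be the most error-prone point: one must confirm that the leftover unit $t^{-1}$ is spurious and that no overall sign intervenes. This is pinned down by the symmetric normalization of the Conway polynomial, and the check $C_K(0)=Q(0)\big(4-3Q(0)\big)=1$, valid since $F_n(0)=F_m(0)=1$ for odd $n,m$ so that $Q(0)=1$, confirms it.
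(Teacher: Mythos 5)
Your argument is correct, and while it lives in the same framework as the paper's proof (reduced Burau representation, Theorem~\ref{theorem14}, the explicit matrices $A^n$, $B^m$ with the $\Omega$-entries, and Corollary~\ref{corol11} to pass to Fibonacci polynomials), the computational core is genuinely different. The paper factors $\det(I-M^2)=\det(I-M)\det(I+M)$ with $M=\beta(ww^*)$, rewrites these as $\det(B^mA^n\mp A^nB^m)/\det(A^nB^m)$, evaluates both $2\times2$ determinants by what it calls ``tedious calculations,'' and then needs the separate divisibility identity $\frac{1+t^m+t^n+t^{m+n}-\Omega_m\Omega_nt}{1+t+t^2}=\Omega_m\Omega_n$ to cancel the cyclotomic denominator. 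You instead exploit that $\det M=1$ (since $n+m$ is even), so that $\det(I-M^2)=2-(\lambda^2+\lambda^{-2})=4-\tau^2$ with $\tau=\operatorname{tr}M$; this collapses the whole computation to a single trace, and the factorization $4-\tau^2=(2-\tau)(2+\tau)$ together with $\tau=2-Q(z^2+3)$ produces the cancellation of $1+t+t^2=t(z^2+3)$ automatically, with no ad hoc identity needed. Your trace formula checks out against a direct computation, the conversion $t^{k/2}+t^{-k/2}=F_k(z)(t^{1/2}+t^{-1/2})$ for odd $k$ is the content underlying Corollary~\ref{corol11}, and your handling of the leftover unit $t^{-1}$ via the normalization $C_K(0)=1$ matches what the paper does (it invokes the $t^{\pm1}$ ambiguity of the Alexander polynomial). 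The one presentational gap is that you state the trace ``after carrying out the computation'' without showing it; in a final write-up that step should be displayed, but it is routine and I verified it is right.
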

 \begin{proof}
Consider the Alexander polynomial, $\Delta_K(t)$, of the closure of $ww^*ww^*$. Then using the Burau representation,
\begin{align*}
 \Delta_K(t)&=\frac{\det(I-\beta(ww^*ww^*))}{1+t+t^2}\\
 			&=\frac{\det(I-\beta(ww^*))\det(I+\beta(ww^*))}{1+t+t^2}
 \end{align*}
 Let matrices $A,B$ denote the Burau representation of $\sigma_1$ and $\sigma_2$ respectively. That means,
 \[
 A= \left( \begin{array}{cc}-t & 1  \\ 0 & 1 \end{array} \right), \quad B= \left( \begin{array}{cc}1 & 0  \\ t & -t \end{array} \right)
\]
Since $w=\sigma_1^n\sigma_2^m$, this implies,
\begin{align*}
 \Delta_K(t)&=\frac{\det(I-A^nB^mA^{-n}B^{-m}A^nB^mA^{-n}B^{-m})}{1+t+t^2}\\
 			&=\frac{\det(B^mA^n-A^nB^m)\det(B^mA^n+A^nB^m)}{(1+t+t^2)\det(A^nB^m)^2}
 \end{align*}
 Now observe that,
  \begin{align*}
  A^n &= \left( \begin{array}{cc}(-t)^n & \Omega_n(t)  \\ 0 & 1 \end{array} \right), \quad B^m= \left( \begin{array}{cc}1 & 0  \\ t\Omega_m(t) & (-t)^m \end{array} \right)\\
  A^nB^m &=\left( \begin{array}{cc}(-t)^n+t\Omega_m\Omega_n & (-t)^m\Omega_n  \\ t\Omega_m & (-t)^m \end{array} \right), \quad B^mA^n= \left( \begin{array}{cc}(-t)^n & \Omega_n  \\ t(-t)^n\Omega_m & t\Omega_m\Omega_n \end{array} \right)\\
 \end{align*}
 here $\Omega_n=\Sigma_{i=0}^{n-1}(-t)^i$. Also note that $\det(A^nB^m)=t^{n+m}$. Now, tedious calculations shows that,
 \begin{align*}
 \det(B^mA^n-A^nB^m)&=\Omega_n\Omega_m t(1+t^m+t^n+t^{m+n}-\Omega_m\Omega_nt) \text{ and}\\
 \det(B^mA^n+A^nB^m)&=4(-t)^{m+n}-\Omega_n\Omega_m t(1+t^m+t^n+t^{m+n}-\Omega_m\Omega_nt)
 \end{align*} 
 
 Now using the identity, $\frac{1+t^m+t^n+t^{m+n}-\Omega_m\Omega_nt}{1+t+t^2}=\Omega_m\Omega_n$,\\
\[
 \Delta_K(t)=\frac{\Omega_m\Omega_n t(4t^{m+n}-\Omega_m^2\Omega_n^2t(1+t+t^2))}{t^{2(m+n)}}
\]
Now using the Corollary \ref{corol11}, we have $\frac{\Omega_n^2}{t^{n-1}}=F_n^2(z)$ and $\frac{\Omega_m^2}{t^{m-1}}=F_n^2(z)$. Substituting these identities in above equation, we get
\[
\Delta_K(t)=\frac{F_n^2F_m^2}{t}(4-F_n^2F_m^2(t+\frac{1}{t}+1))
\]
 Now observing $t+\frac{1}{t}+1=z^2+3$ and the Alexander polynomial is defined up to multiplication by $t^{\pm 1}$, we conclude that the Conway polynomial of the closure of braid $ww^*ww^*$ is given by $C(z)=F_n^2F_m^2(4-F_n^2F_m^2(z^2+3))$. 
 \end{proof}
 
Note that
 \[
 C_K(z)\equiv F_n^4F_m^4(z-1)(z+1)\mod 4
 \]
consistent with Theorem~\ref{theorem15}. In fact, in the next proposition we give two special cases of the preceding proposition where $C_K(z)$ splits over the integers: when  $n=m$ and when one of $n,m$ is equal to $1$. However, in general $C_K(z)$ does not split. Indeed $\varphi_{n,m}=(4-F_n^2(z)F_m^2(z)(z^2+3))$ is irreducible in all other cases where $n,m$ are odd integers less than $100$, and it may be that it never splits except in the two cases of Proposition~\ref{factor}.
  
 \begin{proposition}\label{factor}
 Let $m$ be odd. 
 \begin{enumerate}
 \item Then $\varphi_{1,m}=p(z)p(-z)\in\mathbb Z[z],$ where $p(z)=F_{m+1}+F_m+F_{m-1}$.
 \item $\varphi_{m,m}=p(z)p(-z)\in\mathbb Z[z]$, where $p(z)=
F_m^2+F_{2m}-2$.
 \end{enumerate}
 \end{proposition}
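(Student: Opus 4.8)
The plan is to reduce both parts to a handful of standard identities relating the Fibonacci polynomials $F_n$ and the Lucas polynomials $L_n$, and then to exploit the parity of these polynomials in $z$. The cleanest way to have every needed identity at hand is to introduce the closed forms $F_n = (\alpha^n - \beta^n)/(\alpha - \beta)$ and $L_n = \alpha^n + \beta^n$, where $\alpha,\beta$ are the roots of $x^2 - zx - 1 = 0$, so that $\alpha + \beta = z$, $\alpha\beta = -1$, and $(\alpha - \beta)^2 = z^2 + 4$. One checks directly that these satisfy the defining recursions (and, extending by $F_0 = 0$, the relation $L_m = F_{m+1} + F_{m-1}$). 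From them I read off the three facts I will use: the parity rule $F_n(-z) = (-1)^{n+1}F_n(z)$, which holds because $z\mapsto -z$ interchanges $\alpha \leftrightarrow -\beta$; the Pell-type identity $L_m^2 - (z^2+4)F_m^2 = 4(-1)^m$, obtained from $(\alpha^m+\beta^m)^2 - (\alpha^m-\beta^m)^2 = 4(\alpha\beta)^m$; and the duplication formula $F_{2m} = F_m L_m$.

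For part (1), recall $F_1 = 1$, so $\varphi_{1,m} = 4 - F_m^2(z^2+3)$, and note $p(z) = F_{m+1} + F_m + F_{m-1} = L_m + F_m$. Since $m$ is odd, $F_m$ is an even function while $L_m$ is odd, so $p(-z) = F_m - L_m$ and hence $p(z)p(-z) = F_m^2 - L_m^2$. Substituting $L_m^2 = (z^2+4)F_m^2 - 4$ (the Pell identity for $m$ odd) gives $p(z)p(-z) = F_m^2 - (z^2+4)F_m^2 + 4 = 4 - F_m^2(z^2+3) = \varphi_{1,m}$, as desired.

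For part (2), write $p(z) = (F_m^2 - 2) + F_{2m}$. Here $F_m^2$ is an even function for any $m$, and since $2m$ is even, $F_{2m}$ is odd; thus $p(-z) = (F_m^2 - 2) - F_{2m}$ and $p(z)p(-z) = (F_m^2 - 2)^2 - F_{2m}^2$. Using $F_{2m}^2 = F_m^2 L_m^2$ and then $L_m^2 = (z^2+4)F_m^2 - 4$ collapses the right-hand side: $(F_m^2 - 2)^2 - F_m^2[(z^2+4)F_m^2 - 4] = F_m^4 - 4F_m^2 + 4 - (z^2+4)F_m^4 + 4F_m^2 = 4 - F_m^4(z^2+3) = \varphi_{m,m}$.

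I expect no serious obstacle: once the three identities are in place, each part is essentially a one-line algebraic simplification. The only points requiring care are the bookkeeping of parity signs — which rely on $m$ being odd so that $F_m$ is even in part (1), and on $2m$ being even so that $F_{2m}$ is odd in part (2) — together with applying the Pell identity with the correct sign $4(-1)^m = -4$. If one prefers to avoid the closed forms, each of the three identities admits a short induction directly from the recursions; but the Binet-style derivation makes the whole proposition transparent.
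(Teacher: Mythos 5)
Your proof is correct; I checked both computations and the three identities you invoke (the parity rule, the Pell-type identity $L_m^2-(z^2+4)F_m^2=4(-1)^m$, and $F_{2m}=F_mL_m$) all follow from the Binet forms exactly as you say, and the parity bookkeeping with $m$ odd is right. Your route differs from the paper's, though, most notably in part (1): the paper proves (a) by working in the Fibonacci basis of $\mathbb Z[z]$ and expanding products via the rule $F_nF_m=F_{n+m-1}-F_{n+m-3}+\cdots\pm F_{|n-m|+1}$, whereas you recognize $p=F_{m+1}+F_m+F_{m-1}=L_m+F_m$, so that $p(z)p(-z)=F_m^2-L_m^2$ collapses immediately under the Pell identity. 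For part (2) the two arguments are essentially the same computation packaged differently: the paper quotes the single quartic identity $3F_n^4+F_3F_n^4+4(-1)^nF_n^2-F_{2n}^2=0$ (note $F_3=z^2+1$, so $3+F_3=z^2+4$), which is precisely your $F_{2m}^2=F_m^2L_m^2$ combined with the Pell identity, and both are verified via the same Binet substitution. What your version buys is uniformity --- both parts reduce to one identity --- and it makes explicit the parity argument identifying the even and odd parts of $p$, which the paper leaves implicit; what the paper's basis-expansion approach buys is a method that could in principle test splittings $\varphi_{n,m}=p(z)p(-z)$ for other candidate $p$ written in the Fibonacci basis.
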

 \begin{proof}
 The proof of part (a) uses the fact that the Fibonacci polynomials $F_m$ are a basis for $\mathbb Z[z]$ together with the multiplication rule
 $$F_nF_m=F_{n+m-1}-F_{n+m-3}+\cdots \pm F_{|n-m|+1}.$$
 
 Part (b) follows from the identity $$3F_n^4+F_3F_n^4+4(-1)^nF_n^2-F_{2n}^2=0.$$
 This can be proven using the identity
 $$F_n=\frac{\alpha^n-\bar\alpha^n}{\alpha-\bar\alpha},$$
 where $\alpha=\frac{z+\sqrt{z^2+4}}{2}$ and $\bar\alpha=\frac{z-\sqrt{z^2+4}}{2}$.
 \end{proof}
 
 \section{The leading coefficient of the Conway polynomial and amphicheiral knots }
 In this section we explore two conjectures proposed by A. Stoimenow regarding the leading coefficient of the Conway polynomial of an amphicheiral knot. The conjectures are based on the following theorem.
 \begin{theorem}\label{theorem16}
 \cite{St1}Let $K$ be an amphicheiral knot. Then the leading coefficient of the Alexander( and Conway) polynomial of the knot $K$,  $\Delta_K(z)$ is a square(up to sign) and the sign of the leading coefficient of the Alexander polynomial $\Delta_K(z)$ is $(-1)^{\textbf{max deg } \Delta_K(z)}$ if any of the following hold.
 \begin{enumerate}
 \item $K$ is an alternating knot,
 \item $K$ is strongly amphicheiral or negative amphicheiral,
 \item $K$ is any knot with at most 16 crossings or
 \item $K$ is a fibered homogeneous knot. 
 \end{enumerate}
 \end{theorem}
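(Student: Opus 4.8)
The plan is to reduce both assertions---squareness up to sign, and the sign being $(-1)^{\max\deg\Delta_K}$---to a single statement about a Seifert matrix, and then to produce, in each of the four listed classes, a symmetry that factors the relevant integer as a perfect square. Fix a genus $g$ Seifert surface $S$ realizing the Seifert genus, with Seifert matrix $V$, so that $\Delta_K(t)\doteq\det(t^{1/2}V-t^{-1/2}V^T)$. Writing $\Delta_K$ in symmetric form $\sum_{i=-g}^{g}a_i t^i$, the extreme coefficient is $a_g=\det V$, which is also the coefficient of $z^{2g}$ in $C_K(z)$ (the top term of $z=t^{1/2}-t^{-1/2}$ contributes $\det V$). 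Hence both assertions are equivalent to the single claim that $(-1)^{g}\det V$ is a \emph{nonnegative perfect square}. The first technical point is to guarantee, in each class, that $S$ genuinely realizes the genus and that $\det V\neq 0$, so that $\det V$ really is the leading coefficient; this is exactly where each hypothesis is used.

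Cases (2) and (4) are then short. For (2) I would invoke Theorem~\ref{theorem12}: if $C_K(z)=f(z)f(-z)$ then $g=\deg f$ and the leading coefficient is $(-1)^{g}(\mathrm{lc}\,f)^2$, while if $C_K(z)=f(z^2)^2$ the top degree is even and the leading coefficient is $(\mathrm{lc}\,f)^2$; in both subcases $(-1)^{g}\det V$ is a perfect square with the asserted sign. For (4), a fibered knot has monic Alexander polynomial, so $\det V=\pm1$, trivially a square up to sign; the only content is the sign, and homogeneity (Cromwell) ensures that the Seifert-algorithm surface is minimal genus, so $g$ equals the diagram genus and the sign of $\det V=\pm1$ can be read from the homogeneous Seifert graph and matched to $(-1)^{g}$ using the amphicheiral symmetry of that graph.

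The substantial case is (1). For a reduced alternating diagram the Seifert-algorithm surface is minimal genus (Crowell--Murasugi), so $\det V$ is the genuine leading coefficient, and I would exploit the fact that this Seifert form is governed combinatorially by the Seifert graph $G$ (vertices the Seifert circles, edges the crossings), extracting $\det V$ as the top-degree term of $\det(V-tV^T)$ interpreted through $G$. Amphicheirality together with the Menasco--Thistlethwaite flyping theorem yields an orientation-reversing symmetry identifying $G$ with the Seifert graph of the mirror diagram; the key lemma would be that this symmetry pairs the contributing terms so that $\det V=\pm\,\mathrm{Pf}(W)^2$ for an integral skew-symmetric matrix $W$ assembled from the symmetry, i.e. a perfect square. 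Case (3) is then an independent finite verification: one enumerates the amphicheiral knots of at most sixteen crossings from the tables and checks the leading coefficient directly.

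The main obstacle is Case (1), and more precisely the passage from symmetry to squareness. Amphicheirality gives only that $V$ is S-equivalent---at best congruent---to $-V^T$, and a congruence $V=P(-V^T)P^T$ by itself yields merely $\det(P)^2=1$, with no squareness of $\det V$. The real work is to upgrade the abstract mirror symmetry into an honest skew (Pfaffian) or block-antidiagonal structure on an \emph{integral} matrix computing the leading coefficient---so that an integer square root exists---while simultaneously tracking the sign $(-1)^{g}$ throughout. This is precisely why the argument must lean on flyping rigidity to pin down the symmetry of the minimal Seifert surface in the alternating case, and on the fiber structure in the fibered case; controlling the square root and the sign at once is the delicate part.
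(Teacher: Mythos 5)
First, a point of comparison: the paper does not prove this theorem at all. It is quoted from Stoimenow \cite{St1} purely as background motivating Conjectures~\ref{conj13} and~\ref{conj14}, so there is no in-paper argument to measure your proposal against; what follows is an assessment of the proposal on its own terms.

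Your reductions are sensible where they are complete: case (2) follows correctly and fully from Theorem~\ref{theorem12} exactly as you say (the leading coefficient of $f(z)f(-z)$ is $(-1)^{\deg f}(\mathrm{lc}\,f)^2$ and that of $f(z^2)^2$ is a positive square in even degree), case (3) is indeed a finite verification, and monicity disposes of squareness in case (4). But the substantial content of the theorem --- case (1), and the sign statement in case (4) --- is not proved; it is deferred to a ``key lemma'' that the mirror symmetry produces an integral matrix whose Pfaffian squares to $\pm\det V$, and to an unspecified matching of signs in the homogeneous Seifert graph. You correctly diagnose why the obvious argument fails (amphicheirality gives at best a congruence $V=P(-V^{T})P^{T}$, which yields only $\det(P)^2=1$ and says nothing about $\det V$ being a square), but diagnosing the obstacle is not the same as overcoming it: the passage from the Menasco--Thistlethwaite symmetry of the reduced alternating diagram to an honest Pfaffian structure on an integral matrix computing the leading coefficient is precisely the theorem, and it is the step Stoimenow (building on the graph-index machinery of Murasugi--Przytycki \cite{Mu}) actually has to carry out. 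There is also a secondary gap you flag but do not close: identifying the leading coefficient with $\det V$ requires $\deg\Delta_K=2g(K)$ together with $\det V\neq 0$, which must be established (via Crowell--Murasugi in the alternating case and the fibration in the fibered case) before the rest of the argument can start. As written, the proposal is a correct strategic outline with the essential idea missing.
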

 This theorem provides strong background for following conjectures. 
 \begin{conjecture}\label{conj13}
 	\cite{St1}The leading coefficient of the Alexander (or Conway) polynomial of an amphicheiral knot is a square.
 \end{conjecture}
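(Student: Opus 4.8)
Since the final statement is Stoimenow's \emph{conjecture}, and the introduction together with the opening of Section~4 announces that the paper \emph{refutes} it, my plan is not to prove the statement but to construct explicit counterexamples. The guiding observation is that Theorem~\ref{theorem16} already forces the leading coefficient to be a square whenever the knot is alternating, strongly or negatively amphicheiral, fibered homogeneous, or has at most $16$ crossings. Hence any counterexample must be a \emph{nonstrong positive} amphicheiral knot that is nonalternating, not fibered homogeneous, and large. The systematic source of such knots is exactly the braid construction of Section~\ref{sect1}: the closure of $(ww^*)^{k}$ is positive amphicheiral, and for a suitably generic braid $w$ on an odd number of strands it should be nonstrong.

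The computational engine is Theorem~\ref{theorem14} together with the factorization used in the proof of Theorem~\ref{theorem15}: for $k=2$,
\begin{equation*}
\Delta_K(t)=\frac{\det\bigl(I-\beta((ww^*)^2)\bigr)}{1+t+\cdots+t^{n-1}}=\frac{\det\bigl(I-\beta(ww^*)\bigr)\det\bigl(I+\beta(ww^*)\bigr)}{1+t+\cdots+t^{n-1}}.
\end{equation*}
The factor $\det(I-\beta(ww^*))$ governs the strongly amphicheiral knot $\Cl(ww^*)$, whose Conway polynomial is a square $f(z^2)^2$ by Theorem~\ref{theorem12}; so any failure of squareness in the leading coefficient must originate in the \emph{new} factor $\det(I+\beta(ww^*))$. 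The plan is therefore to run over candidate braids $w$, compute the two determinants, convert to the $z$-variable via $z=\sqrt t-1/\sqrt t$, and read off the leading coefficient of $C_K(z)$, searching for a value that is not a perfect square.

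This is exactly what the nonalternating example of Figure~\ref{fig12} already accomplishes: with $w=\sigma_1^2\sigma_2\sigma_1^{-1}\sigma_2\sigma_1\sigma_3^{-1}\sigma_2\sigma_4\sigma_3^{-1}\sigma_4^2$ one obtains $C_K(z)=(1+z^2)^2(1+3z^2)^2(1-11z^2+33z^4+8z^6)$, whose leading coefficient $1\cdot 9\cdot 8=72$ is not a square, so Conjecture~\ref{conj13} fails. This is perfectly consistent with Theorem~\ref{theorem15}, since $72\equiv 0\bmod 4$ and the mod-$4$ splitting only forces the leading coefficient to be a square modulo $4$, never on the nose. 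So the refutation costs nothing against the main positive results of the paper.

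The main obstacles are not algebraic but lie in verification. First, one must ensure the braid closes to a \emph{knot} rather than a multi-component link, i.e.\ that the underlying permutation is an $n$-cycle, so that ``amphicheiral knot'' is even meaningful; the periodicity bookkeeping of Theorem~\ref{theorem15} respects this. Second, and most delicate, one must confirm that the example genuinely escapes \emph{every} clause of Theorem~\ref{theorem16}: the non-square leading coefficient itself rules out clause~(1), the crossing number must be checked to exceed $16$, and one must argue the knot is neither fibered homogeneous nor \emph{strongly} amphicheiral — for were it strongly amphicheiral, clause~(2) would reinstate a square leading coefficient and the example would collapse. Establishing nonstrong (as opposed to merely periodic) amphicheirality is therefore the real crux, and is where the construction must be justified most carefully.
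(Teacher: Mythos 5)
Your proposal is correct and takes essentially the same route as the paper: the paper refutes Conjecture~\ref{conj13} with exactly the braid $w=\sigma_1^2\sigma_2\sigma_1^{-1}\sigma_2\sigma_1\sigma_3^{-1}\sigma_2\sigma_4\sigma_3^{-1}\sigma_4^2$ you cite, whose closure $ww^*ww^*$ has Conway polynomial with non-square leading coefficient $72$ (Counterexample~\ref{counter11}), supplemented by further examples with prime leading coefficients. One minor point: verifying nonstrong amphicheirality is not the logical crux of the refutation---amphicheirality of the closure of $(ww^*)^2$ (which holds by construction) together with the computed non-square leading coefficient already suffices; the nonstrongness check only serves as a consistency check against clause~(2) of Theorem~\ref{theorem16}.
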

 
 \begin{conjecture}\label{conj14}
 	\cite{St1}The sign of the leading coefficient of the Alexander polynomial, $\Delta_K$ of an amphicheiral knot $K$ is $(-1)^{\textbf{max deg} \Delta_K}$.
 \end{conjecture}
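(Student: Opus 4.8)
The plan is to \emph{disprove} Conjecture~\ref{conj14} rather than to prove it. Theorem~\ref{theorem16} already establishes the sign rule in exactly the cases where the leading coefficient is known to be a square---strongly amphicheiral, negative amphicheiral, alternating, and small knots---so any counterexample must come from the same nonstrong positive amphicheiral regime that already defeated Conjecture~\ref{conj12}. The most economical source of such knots is the family of braid closures $(ww^*)^{k}$ from Section~\ref{sect1}, which are positively amphicheiral by construction, so I would search there.

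To make the target concrete I would first translate the sign condition into the Conway variable. Writing $\deg_z C_K=2g$ and letting $c_g$ be the leading coefficient of $C_K(z)$, the substitution $z=\sqrt{t}-1/\sqrt{t}$ sends $z^{2g}$ to $t^{g}$ plus terms of lower order in $t$, so $c_g$ is also the coefficient of $t^{g}$ in the symmetric Alexander polynomial, whose maximal degree is $g$. Hence Conjecture~\ref{conj14} is the assertion that the sign of $c_g$ equals $(-1)^{g}$, and a counterexample is an amphicheiral knot whose leading Conway coefficient is positive with $g$ odd (or negative with $g$ even). Computing $C_K$ for these braid closures is routine via the Burau formula of Theorem~\ref{theorem14}, exactly as in Proposition~\ref{lemma14}.

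A quick check rules out the simplest family: for the $3$-braids of Proposition~\ref{lemma14} the factor $4-F_n^2F_m^2(z^2+3)$ supplies the top term, giving $c_g=-1$, while the degree $4(n+m)-6$ forces $g=2(n+m)-3$ to be odd, so $(-1)^{g}=-1$ and the sign rule holds automatically. I would therefore move to $4$- or $5$-strand braids $w$ and compute $C_{(ww^*)^2}$, searching for a positive leading coefficient paired with odd $g$. The $5$-braid example recorded after Conjecture~\ref{conj12}, whose Conway polynomial has leading coefficient $+72$ and $z$-degree $14$ (so $g=7$), is exactly of this type and simultaneously refutes Conjectures~\ref{conj13} and~\ref{conj14}. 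The main obstacle is that there is no closed-form control over how the sign of $c_g$ and the parity of $g$ interact for a general braid, so the construction is really a guided search rather than a structural argument; along the way one must keep the underlying permutation a full cycle so that the closure is a knot (as in the proof of Theorem~\ref{theorem15}) and choose $w$ so that the resulting knot is genuinely nonstrong, placing it outside the reach of Theorem~\ref{theorem16}.
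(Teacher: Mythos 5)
Your proposal is correct and matches the paper's treatment: the conjecture is refuted by exhibiting nonstrong positive amphicheiral braid closures $ww^*ww^*$, and the $5$-braid example you cite (leading Conway coefficient $+72$ in degree $14$, so $g=7$) is precisely the paper's Counterexample~\ref{counter11}; the paper also records a second such knot (Counterexample~\ref{counter12}) with leading coefficient $-3$ in degree $20$. Your translation of the sign condition into the Conway variable and your check that the $3$-braid family of Proposition~\ref{lemma14} cannot produce counterexamples are both accurate.
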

 
 It is easy to see that any counterexample to Conjecture \ref{conj13} is again a counterexample to the Kawauchi conjecture \ref{conj12}. More formally we can say,
 \begin{fact}
 If the leading coefficient of the polynomial $G(z)\in \mathbb{Z}[z]$ is not square then $G(z)\ne F(z)F(-z)$ for any $F(z)\in \mathbb{Z}[z]$.
 \end{fact}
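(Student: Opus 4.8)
The plan is to prove the contrapositive: assuming $G(z)=F(z)F(-z)$ for some $F\in\Z[z]$, I will show that the leading coefficient of $G$ is a perfect square. The claim concerns only the top-degree term of a product, so I would prove it by directly tracking leading terms rather than by invoking any structural property of $G$.

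First I would set $d=\deg F$ and write $F(z)=az^{d}+(\text{lower-order terms})$ with $a=\mathrm{LC}(F)\neq 0$. Substituting $-z$ for $z$ scales the top term by $(-1)^{d}$, so $F(-z)=(-1)^{d}az^{d}+(\text{lower-order terms})$. Each factor has a single term of maximal degree, hence the product has a single term of degree $2d$, obtained by multiplying the two leading terms, with no contribution into degree $2d$ from the lower-order parts. Therefore $\deg G=2d$ and $\mathrm{LC}(G)=a\cdot(-1)^{d}a=(-1)^{d}a^{2}$, so $|\mathrm{LC}(G)|=a^{2}$ is a perfect square. Reading the contrapositive, if $\mathrm{LC}(G)$ is not a perfect square then no factorization $G=F(z)F(-z)$ exists; taking $G=C_K$, this is precisely the implication that a knot violating the squareness Conjecture~\ref{conj13} also violates the splitting Conjecture~\ref{conj12} (for instance a Conway polynomial with leading coefficient $72$ cannot split over $\Z$).

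The only point needing care — and the step I expect to be the main obstacle — is the sign $(-1)^{\deg F}$. The leading coefficient is literally a perfect square when $\deg F$ is even, but is the negative of a square when $\deg F$ is odd, as already seen for the figure eight, where $C(z)=1-z^{2}=(1+z)(1-z)$ has leading coefficient $-1$. The Fact should therefore be read with the convention of Theorem~\ref{theorem16}, that the leading coefficient is a square up to sign; the sharp form of the contrapositive is the identity $\mathrm{LC}(G)=(-1)^{\deg F}(\mathrm{LC}\,F)^{2}$. Since the leading coefficients arising in the intended counterexamples are positive, the sign is forced to be $+1$ there, and being a non-square already obstructs any splitting $F(z)F(-z)$, which is all that the application requires.
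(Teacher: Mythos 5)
Your proof is correct and is exactly the argument the paper leaves implicit: the Fact is stated without proof as ``easy to see,'' and the intended justification is precisely your leading-coefficient computation $\mathrm{LC}(G)=(-1)^{\deg F}\bigl(\mathrm{LC}(F)\bigr)^{2}$, so that a non-square (up to sign) leading coefficient obstructs any splitting $G(z)=F(z)F(-z)$. Your remark about the sign is a genuine sharpening worth keeping: read literally the Fact fails for $G(z)=1-z^{2}=(1+z)(1-z)$, whose leading coefficient $-1$ is not a square, so the statement must be understood with the ``square up to sign'' convention of Theorem~\ref{theorem16}, which your formulation makes explicit and which suffices for all of the paper's counterexamples.
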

 
 Now we give several counterexamples to conjectures we mentioned at the beginning.
 \begin{counter}\label{counter11}
 Let $w=\sigma_1^2\sigma_2\sigma_1^{-1}\sigma_2\sigma_1\sigma_3^{-1}\sigma_2\sigma_4\sigma_3^{-1}\sigma_4^2$, which is a 5-strand braid. Then the Conway polynomial of the closure of a $ww^*ww^*$ is $1 - 3 z^2 - 33 z^4 + 54 z^6 + 535 z^8 + 869 z^{10} + 489 z^{12} + 72 z^{14}$. The knot $ww^*ww^*$ is shown in the Figure \ref{fig22}. It was obtained using Knotscape\cite{knotscape}.
 \end{counter}
 
 \begin{figure}[h!]
\centering
\includegraphics[width=5cm]{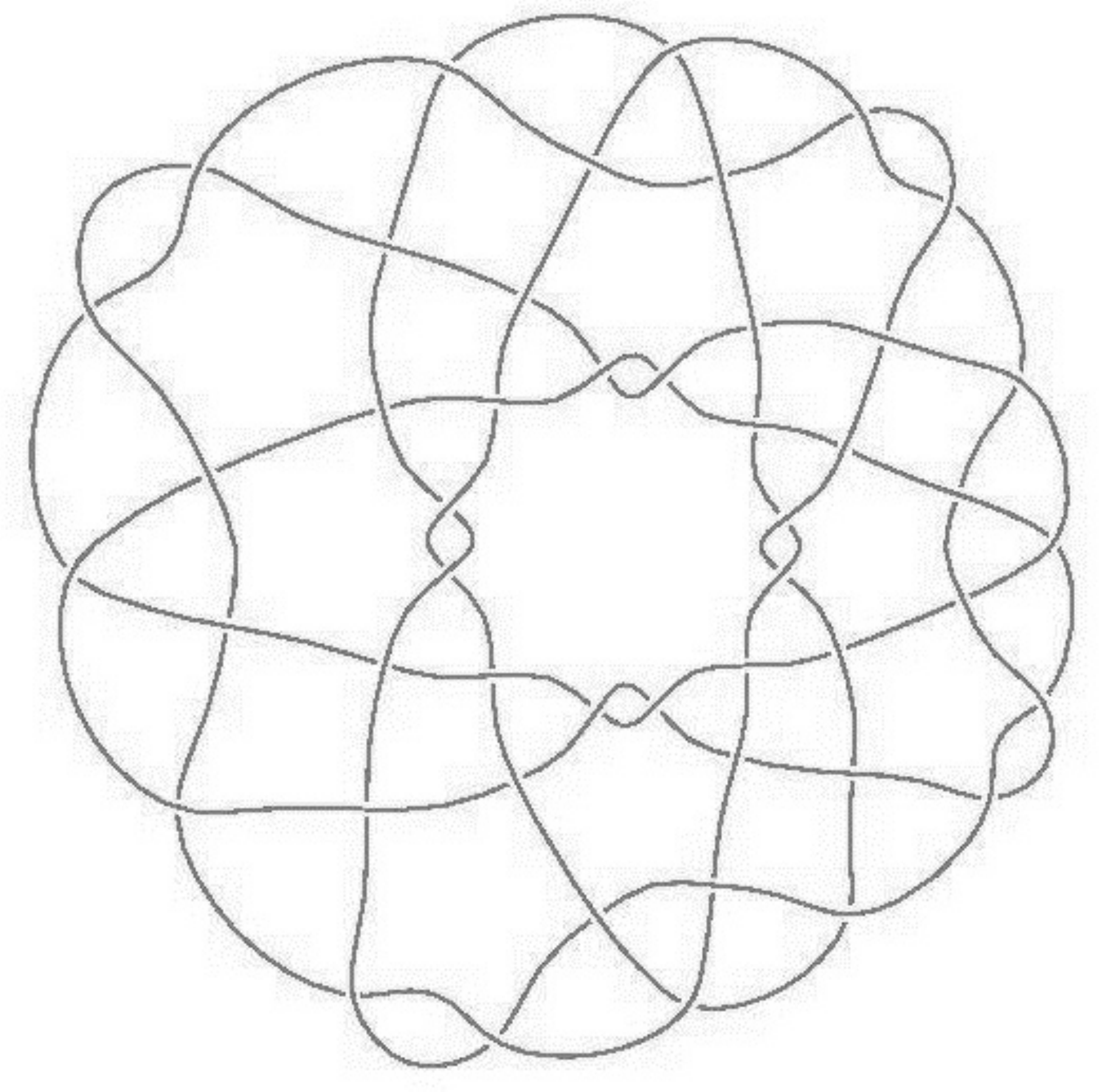}
 	  \caption{Amphicheiral knot with the leading coefficient 72 }\label{fig22}
 \end{figure}

Theorem 6 in \cite{Mu} states that if $K$ is an alternating knot and the leading coefficient of the Alexander polynomial is prime, then $K$ is non-amphicheiral. With this theorem, one may think that the leading coefficient of the Alexander polynomial of an amphicheiral knot which does not satisfy above conjectures must be composite. However the leading coefficient of the Alexander polynomial of the following amphicheiral knot is prime and serves as a counterexample to three Conjectures \ref{conj12}, \ref{conj13} and \ref{conj14}. 

\begin{counter}\label{counter12}
Let $w=\sigma_1^2\sigma_2\sigma_1^{-1}\sigma_3^{-1}\sigma_2^{-1}\sigma_1\sigma_4\sigma_3\sigma_2^2\sigma_3^2\sigma_4$ be the 5-strand braid. Then the Conway polynomial of the closure of $ww^*ww^*$ is $1 + 5 z^2 + 39 z^4 + 246 z^6 + 657 z^8 + 743 z^{10} + 301 z^{12} -78 z^{14} - 105 z^{16} - 31 z^{18} - 3 z^{20}$. The knot is positive amphicheiral but not strongly amphicheiral.
\end{counter}

After seeing these counterexamples, one may wish to see a prime non-alternating knot which does not satisfy all of the above three conjectures. Upon our request, M. Thistlethwaite \cite{Mor} kindly provided us the list of 19-crossings prime positive amphicheiral knots. In that list, the following knot does not satisfy any of the above three conjectures and it is a non-alternating positive(not strong) amphicheiral knot. 

\begin{counter}\label{counter13}
Let $K$ be the knot with the following Dowker-Thistlethwaite code. 
\[
 6, -12, 32, -18, -26, 16, -4, -22, 34, -38, 30, -14, 20, 36, -10, 24, 2 , 28, -8
\]

Then $K$ is a prime non-alternating and positive non strong amphichiral knot with the Conway polynomial, $1+3z^2+8z^4$. The knot is depicted in the Figure \ref{fig33}. It was obtained using Knotscape\cite{knotscape}.Observe that $1+3z^2+8z^4\equiv(1-z)(1+z)\mod 4$. Thus it is not a counterexample to Conjecture \ref{conj11}

\end{counter}

 \begin{figure}[h!]
\centering
\includegraphics[width=5cm]{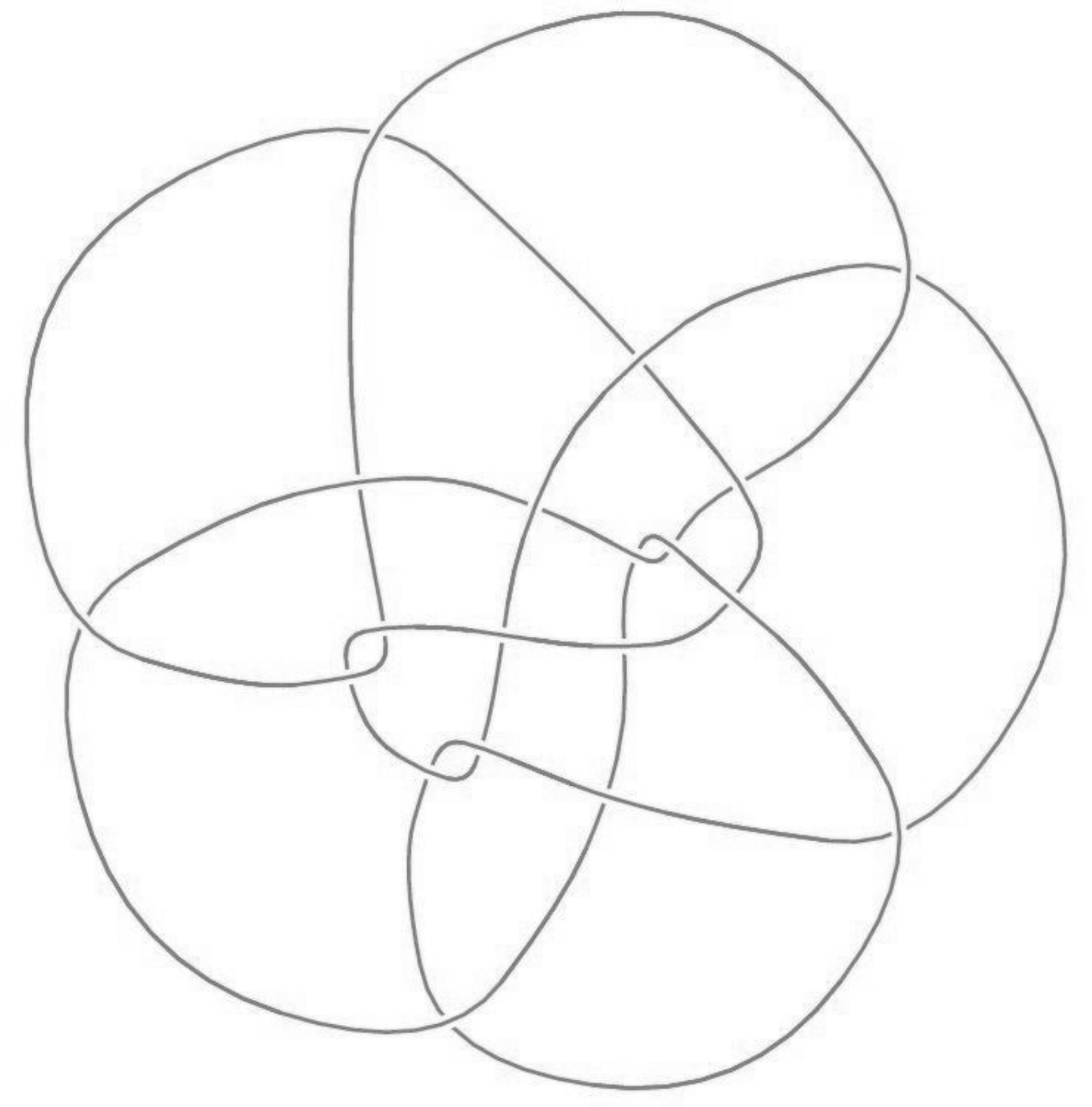}
 	  \caption{Non alternating prime amphicheiral knot with the leading coefficient 8 }\label{fig33}
 \end{figure}
We finish this section with following realization problem. 
\begin{question}
Let $n$ be an arbitrary integer. Is it possible to have an amphicheiral knot $K$ with $n$ as a leading coefficient of its Conway polynomial? 
\end{question}
Conjecture ~\ref{conj11} puts restrictions on the $n$ that can appear. Namely, if $\deg(C_K(z))=2n$, then the leading coefficient must be of the form $(-1)^n(4k+1)$ or divisible by $4$. In particular no number of the form $4k+2$ can appear as the leading coefficient of an amphicheiral knot if Conjecture~\ref{conj11} is true.

If $n$ is a square, then we have a positive answer to the preceding question due to E. Flapan. Namely she has shown if $C(z)=F(z)^2$ and $F(0)=1$ for some integer polynomial $F(z)$, then there exists a strong positive amphicheiral knot which has $C(z)$ as its Conway polynomial \cite{Flap}. The question will be more interesting when the absolute value of $n$ is prime. 

\begin{question}\label{q1}
For a given prime $p$, is there an amphicheiral knot such that the absolute value of the leading coefficient of the Conway polynomial is $p$?
\end{question}

If the answer to the above question is yes, we can certainly choose the amphicheiral knot to be prime and even hyperbolic by the factorization formula for the Alexander Polynomial in terms of the knot complement's JSJ components cf. the proof of Theorem~\ref{theorem13}.  That means we have prime amphicheiral knots with prime numbers as leading coefficient of their Conway polynomials. It is clear from Theorem \ref{theorem16}, if it exists then they are non alternating positive non strong amphicheiral knots.

We found several amphicheiral knots which have odd prime numbers as leading coefficients. The table \ref{leadtable} and the Figure \ref{primek} summarize our finding. Figures were obtained using Knotscape\cite{knotscape} and the Conway polynomial calculated using the Mathematica package KnotTheory\cite{knottheory, wolfram}. 

  \begin{table}[h!]
  \begin{center}
\begin{footnotesize}
  \caption{Leading coefficients of the Conway polynomial for knot $ww^*ww^*$  }
  \label{leadtable}
\begin{tabular}{|c|c|c|} \hline
Braid word $w$ & \multicolumn{1}{|p{3cm}|}{\centering Leading coefficient \\ of the Conway polynomial \\ of $ww^*ww^*$} & \multicolumn{1}{|p{2cm}|}{\centering Number of \\strands} \\ \hline
$\sigma_1^2\sigma_2\sigma_1^{-1}\sigma_3^{-1}\sigma_2^{-1}\sigma_1\sigma_4\sigma_3\sigma_2^2\sigma_3^2\sigma_4$ & -3 & 5\\ \hline
$\sigma_1\sigma_2^{-1}\sigma_3\sigma_2^{-1}\sigma_1^{-1}\sigma_2^{-1}\sigma_4^{-1}\sigma_3\sigma_2^{-1}\sigma_3\sigma_4^{-1}\sigma_4^{-1}\sigma_4^{-1}\sigma_4^{-1}$ & 5 & 5\\ \hline
$\sigma_1\sigma_2\sigma_3^{-1}\sigma_4\sigma_4\sigma_4\sigma_3^{-1}\sigma_2^{-1}\sigma_1^{-1}\sigma_3^{-1}\sigma_2^{-1}\sigma_3^{-1}\sigma_4\sigma_3^{-1}\sigma_2\sigma_3^{-1}\sigma_4\sigma_3^{-1}$ & -7 & 5 \\ \hline
$\sigma_1\sigma_2^{-1}\sigma_3^{-1}\sigma_4\sigma_3^{-1}\sigma_2\sigma_1^{-1}\sigma_3^{-1}\sigma_3^{-1}\sigma_4^{-1}\sigma_3\sigma_2\sigma_2\sigma_4^{-1}\sigma_3^{-1}\sigma_2$ & -11 & 5\\ \hline
$\sigma_1\sigma_2\sigma_3\sigma_4\sigma_3\sigma_3\sigma_3\sigma_3\sigma_2^{-1}\sigma_1^{-1}\sigma_2\sigma_3^{-1}\sigma_3^{-1}\sigma_2\sigma_2\sigma_3\sigma_4^{-1}\sigma_3$ & 13 & 5\\ \hline
$\sigma_1\sigma_2^{-1}\sigma_2^{-1}\sigma_3\sigma_2^{-1}\sigma_4^{-1}\sigma_4^{-1}\sigma_3\sigma_2^{-1}\sigma_1^{-1}\sigma_2^{-1}\sigma_4^{-1}\sigma_3\sigma_2^{-1}$
& 17 & 5 \\\hline
\end{tabular} 
\end{footnotesize}
  \end{center}
 \end{table}
The table suggest that using 5- strand braid words $w$, we may be able to get all odd prime numbers (in absolute value) as a leading coefficient.  
Conjecture \ref{conj11} implies that $\pm2$ cannot appear as a leading coefficient.

%
\begin{figure}
\begin{scriptsize}
\begin{tabular}{ccc} 
	{\includegraphics[width=4cm]{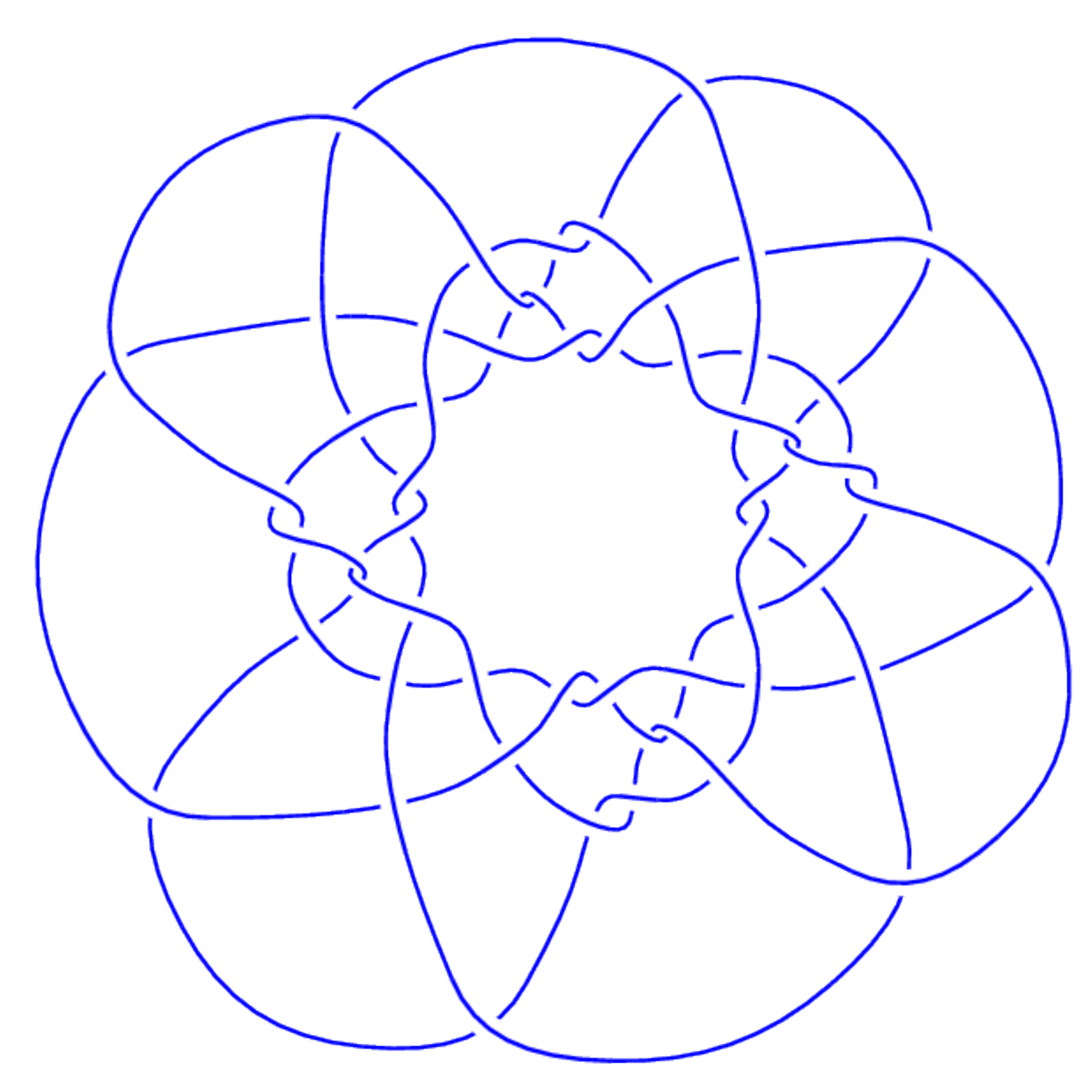}} &	\qquad &{\includegraphics[width=4cm]{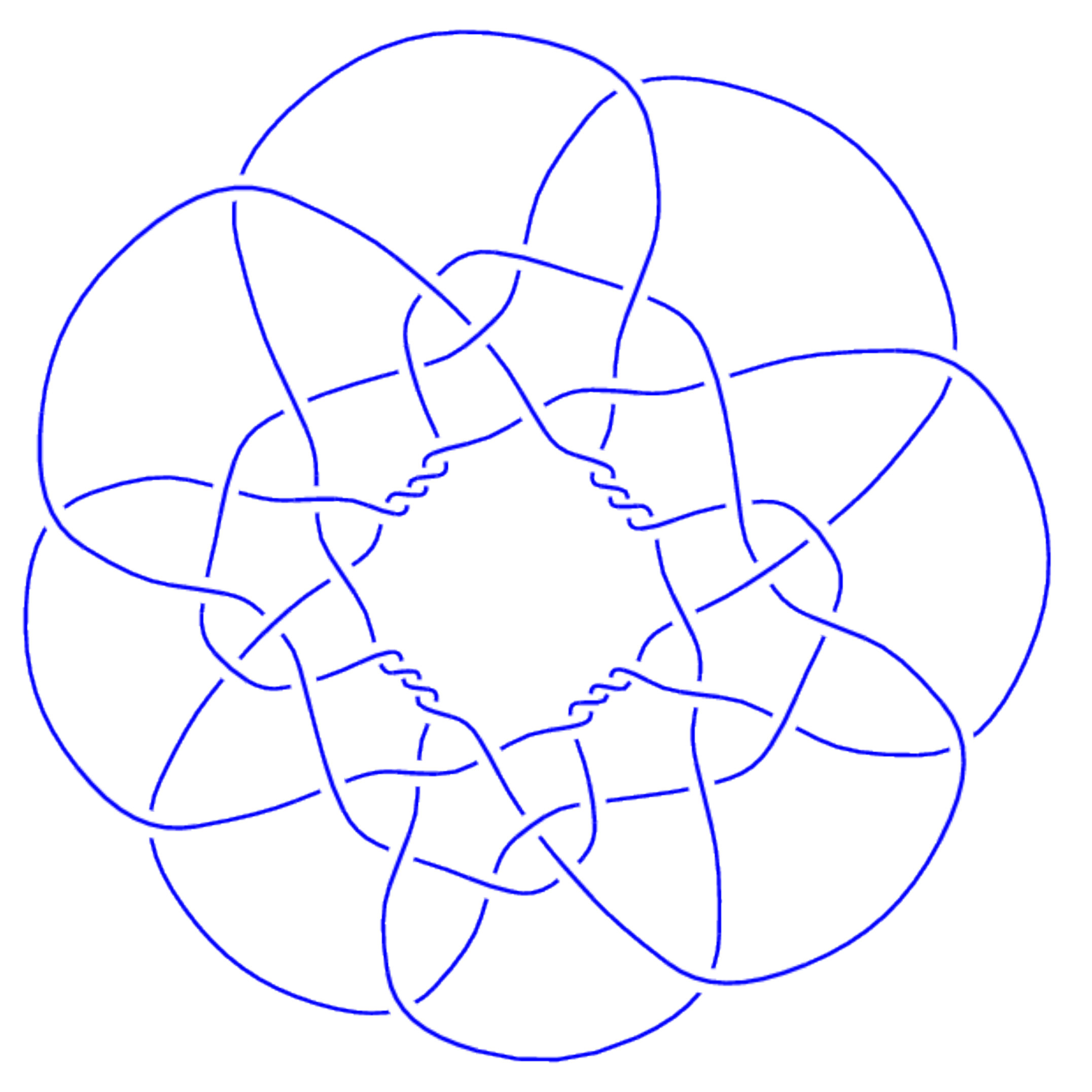}} \\
	Leading coefficient = -3 & \qquad &Leading coefficient = 5 \\
	{\includegraphics[width=4cm]{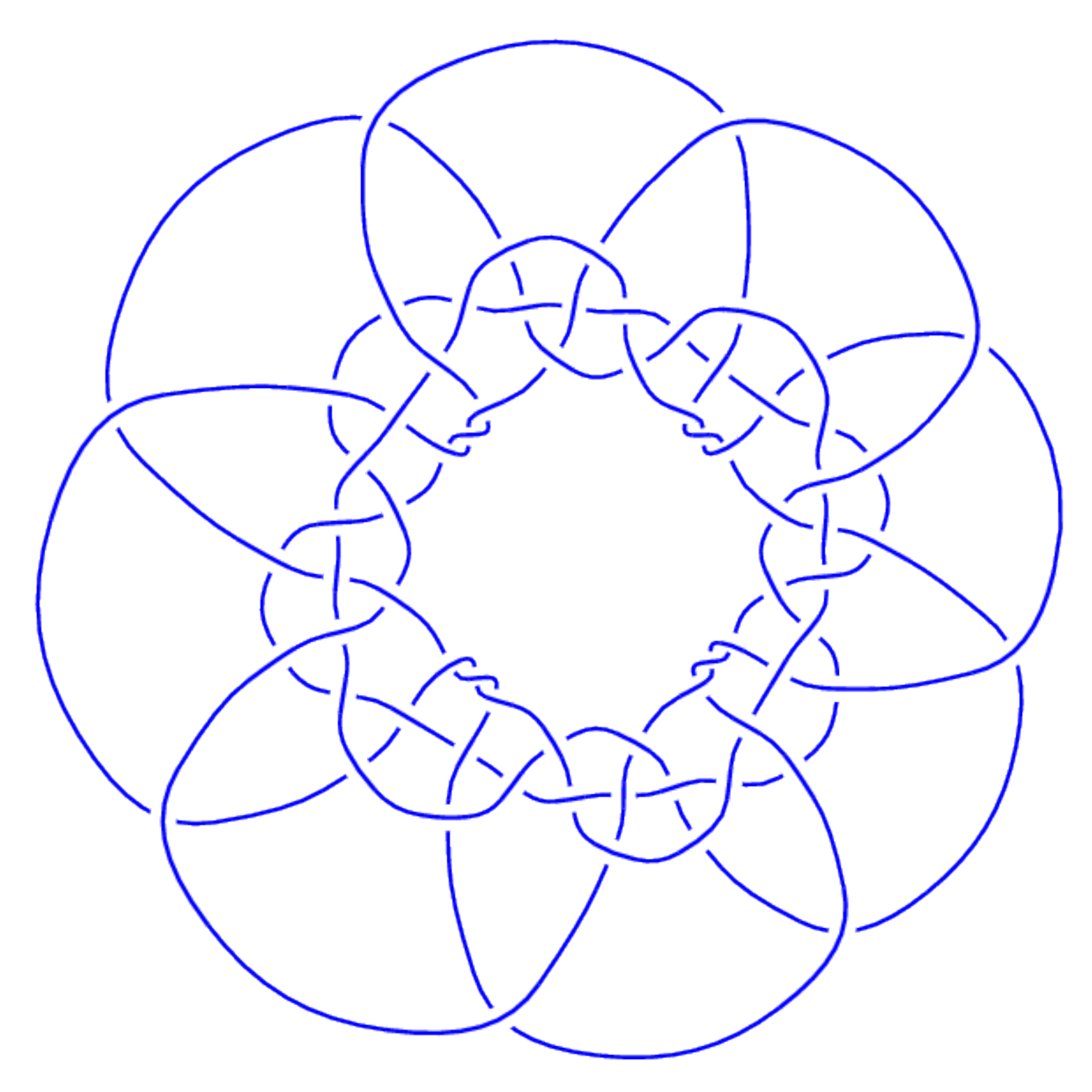}} & \qquad&	{\includegraphics[width=4cm]{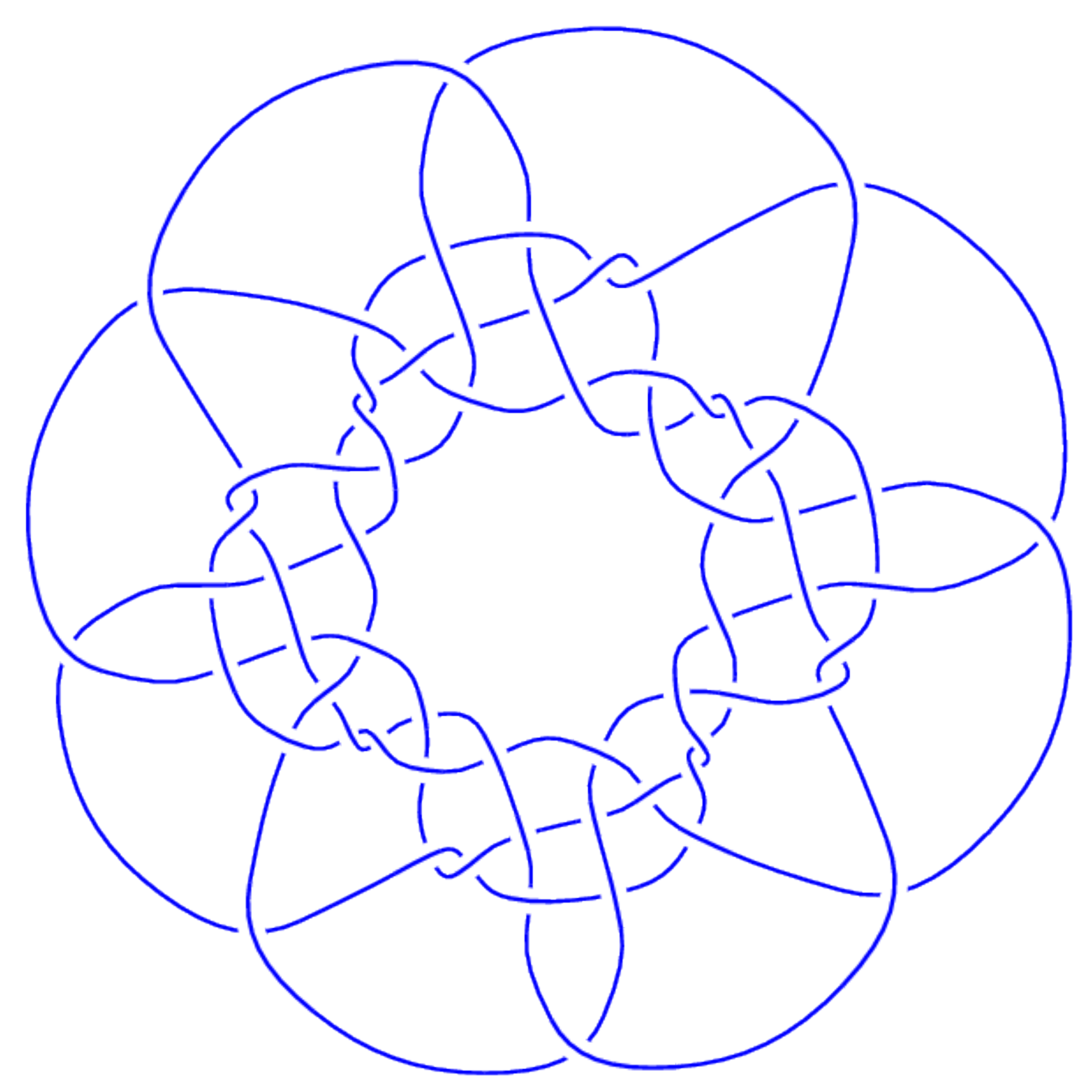}} \\
		Leading coefficient = -7 &\qquad& Leading coefficient = -11 \\
			{\includegraphics[width=4cm]{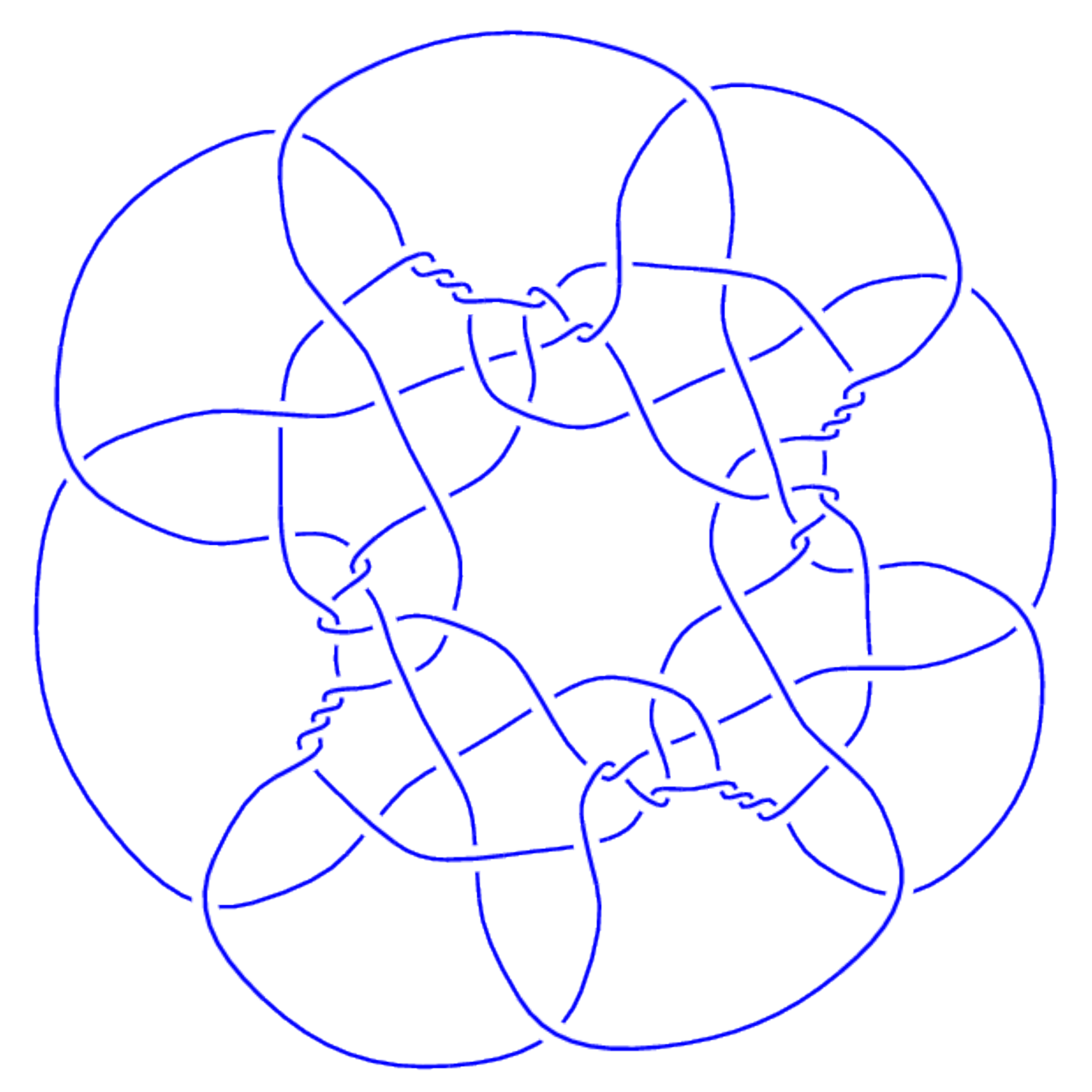}} &\qquad&	{\includegraphics[width=4cm]{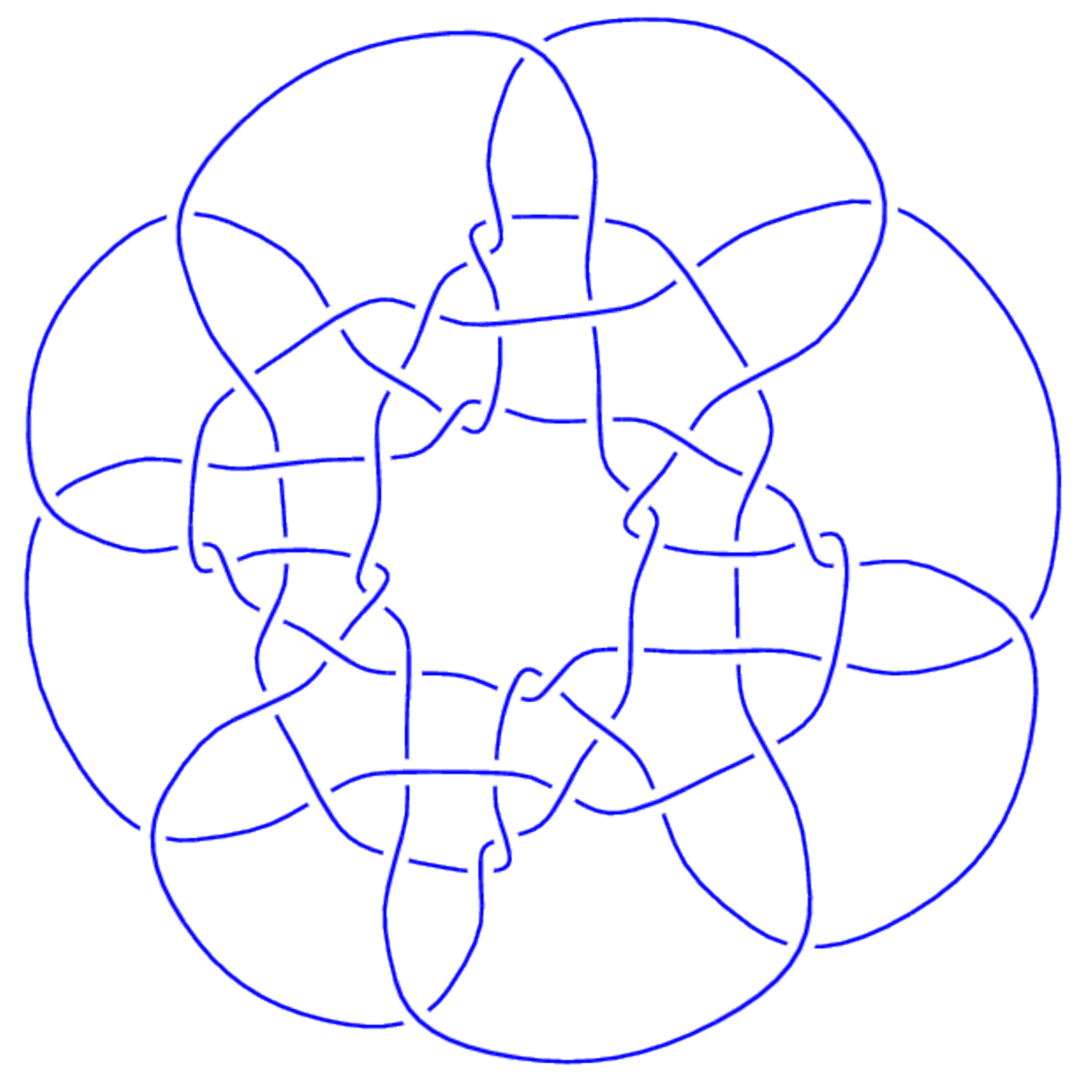}} \\
				Leading coefficient = 13 & \qquad&Leading coefficient = 17 \\	
\end{tabular}
\end{scriptsize}

\caption{Amphicheiral knots where the leading coefficient of the Conway polynomial is prime }\label{primek}
\end{figure}

\newpage
\section*{Acknowledgments}
The authors wish to thank Shashikant Mulay and Morwen Thistlethwaite for helpful discussions.

\end{document}